\newtheorem{theorem}{Theorem}[section]
\newtheorem{lemma}[theorem]{Lemma}
\theoremstyle{definition}
\newtheorem{definition}[theorem]{Definition}
\newtheorem{proposition}[theorem]{Proposition}
\theoremstyle{remark}
\newtheorem{remark}[theorem]{Remark}
\theoremstyle{notation}
\theoremstyle{construction}
\newtheorem{construction}[theorem]{Construction}
\newtheorem{corollary}[theorem]{Corollary}
\numberwithin{equation}{section}
\journal{JSL}
\begin{document}

\begin{frontmatter}



\title{The logic of pseudo-uninorms and their residua\tnoteref{mytitlenote}}
\tnotetext[mytitlenote]{This work is supported by the National Foundation of Natural Sciences of China (Grant No:  61379018 \&61662044\& 11571013)}

\author{SanMin Wang}

\address{Faculty of Science,  Zhejiang Sci-Tech University,  Hangzhou 310018,  P.R. China

Email: wangsanmin@hotmail.com}

\begin{abstract}
Our method  of density elimination is generalized to the non-commutative substructural logic ${\rm {\bf GpsUL}}^\ast$.  Then the standard
completeness of ${\rm {\bf HpsUL}}^\ast $ follows as a lemma by virtue of previous work by Metcalfe and Montagna.  This result shows that ${\rm {\bf HpsUL}}^\ast $
is the logic of pseudo-uninorms and their residua and answered the question posed by Prof. Metcalfe,  Olivetti,   Gabbay  and Tsinakis.
\end{abstract}

\begin{keyword}  Density elimination\sep Pseudo-uninorm logic\sep Standard
completeness of ${\rm {\bf HpsUL}}^\ast $\sep Substructural logics\sep Fuzzy logic


 \MSC {03B50,  03F05,  03B52,  03B47}

\end{keyword}

\end{frontmatter}
\setlength\linenumbersep{1cm}
\renewcommand\linenumberfont{\normalfont}

\section{Introduction}

In 2009,  Prof. Metcalfe,  Olivetti and Gabbay conjectured that the Hilbert
system ${\rm {\bf HpsUL}}$ is the logic of pseudo-uninorms and
their residua [1].  Although ${\rm {\bf HpsUL}}$ is the logic of bounded representable residuated lattices,   it is not the case,  as shown by Prof. Wang and Zhao in [2].  In 2013,  we constructed the system ${\rm {\bf
HpsUL}}^\ast $ by adding the weakly commutativity rule \[(WCM) \vdash
(A\rightsquigarrow t)\to (A\to t)\] to ${\rm {\bf HpsUL}}$ and conjectured
that it is the logic of residuated pseudo-uninorms and their
residua [3].

In this paper,  we prove our conjecture by showing that the density
elimination holds for the hypersequent system ${\rm {\bf GpsUL}}^{\rm {\bf
\ast }}$ corresponding to ${\rm {\bf HpsUL}}^\ast $. Then the
standard completeness of ${\rm {\bf HpsUL}}^\ast $ follows as a
lemma by virtue of previous work by Metcalfe and Montagna [4]. This shows
that ${\rm {\bf HpsUL}}^\ast $ is an axiomatization for the variety
of residuated lattices generated by all dense residuated chains.  Thus we
also answered the question posed by Prof. Metcalfe and Tsinakis in [5] in
2017.

In proving the density elimination for ${\rm {\bf GpsUL}}^{\rm {\bf \ast
}}$,  we have to overcome several difficulties as follows. Firstly,
cut-elimination doesn't holds for ${\rm {\bf GpsUL}}^\ast $. Note that
$(WCM)$ and the density rule$(D)$ are formulated as
\[\cfrac{G\vert \Gamma
, \Delta \Rightarrow t}{G\vert \Delta, \Gamma \Rightarrow t}\, \, \, , \quad\cfrac{G\vert \Pi \Rightarrow p\vert \Gamma, p, \Delta \Rightarrow B}{G\vert \Gamma, \Pi, \Delta \Rightarrow B}\]
in ${\rm {\bf GpsUL}}^\ast $,
respectively.  Consider the following derivation fragment.
\[
\boxed {\cfrac{\cfrac{ \ddots  \vdots  {\mathinner{\mkern0.5mu\raise0.7pt\hbox{.}\mkern0.0mu
 \raise2.1pt\hbox{.}\mkern0.8mu\raise3.6pt\hbox{.}}}}{G_1 \vert \Gamma _1
, t, \Delta _1 \Rightarrow A}\quad
\cfrac{\cfrac{ \ddots  \vdots  {\mathinner{\mkern0.5mu\raise0.7pt\hbox{.}\mkern0.0mu
 \raise2.1pt\hbox{.}\mkern0.8mu\raise3.6pt\hbox{.}}}}{G_2 \vert \Gamma _2
, \Delta _2 \Rightarrow t}}{G_2 \vert \Delta _2, \Gamma _2 \Rightarrow
t}(WCM)}{G_1 \vert G_2 \vert \Gamma _1, \Delta _2, \Gamma _2, \Delta _1
\Rightarrow A}(CUT)}.
\]
By the induction hypothesis of the proof of cut-elimination,  we get that\\
$G_1 \vert G_2 \vert \Gamma _1, \Gamma _2, \Delta _2, \Delta _1 \Rightarrow
A$ from $G_2 \vert \Gamma _2, \Delta _2 \Rightarrow t$ and $G_1 \vert \Gamma
_1, t, \Delta _1 \Rightarrow A$ by $(CUT)$. But we can't deduce $G_1 \vert
G_2 \vert \Gamma _1, \Delta _2, \Gamma _2, \Delta _1 \Rightarrow A$ from
$G_1 \vert G_2 \vert \Gamma _1, \Gamma _2, \Delta _2, \Delta _1 \Rightarrow
A$ by  $(WCM)$. We overcome this difficulty by introducing the following weakly cut rule into ${\rm {\bf GpsUL}}^\ast $

$$\cfrac{G_1 \vert \Gamma, t, \Delta \Rightarrow A \quad G_2
\vert \Pi \Rightarrow t}{G_1 \vert G_2 \vert \Gamma
, \Pi, \Delta \Rightarrow A}(WCT).$$

Secondly,  the proof of the density elimination for ${\rm {\bf GpsUL}}^{\rm
{\bf \ast }}$ become troublesome even for some simple cases in ${\rm {\bf
GUL}}$ [4]. Consider the following derivation fragment
\[
\boxed {\cfrac{\cfrac{\cfrac{ \ddots  \vdots  {\mathinner{\mkern0.5mu\raise0.7pt\hbox{.}\mkern0.0mu
 \raise2.1pt\hbox{.}\mkern0.8mu\raise3.6pt\hbox{.}}}}{G_1 \vert \Gamma _1
, \Pi _1, \Sigma _1 \Rightarrow A_1 }\quad\cfrac{ \ddots  \vdots  {\mathinner{\mkern0.5mu\raise0.7pt\hbox{.}\mkern0.0mu
 \raise2.1pt\hbox{.}\mkern0.8mu\raise3.6pt\hbox{.}}}}{G_2 \vert \Gamma _2
, \Pi _2 ', p, \Pi _2 '', \Sigma _2 \Rightarrow p}{\kern
1pt}{\kern
1pt}}{G_1 \vert G_2 \vert \Gamma _1, \Pi _2 ', p, \Pi _2
'', \Sigma _1 \Rightarrow A_1 \vert \Gamma _2, {\kern
1pt}\Pi _1, \Sigma _2 \Rightarrow p{\kern
1pt}}(COM)}{G_1 \vert G_2 \vert
\Gamma _1, \Pi _2 ', \Gamma _2, \Pi _1 {\kern
1pt}, \Sigma _2, \Pi _2 '', \Sigma _1 \Rightarrow A_1 }(D)}.
\]
Here,  the major problem is how to extend $(D)$ such that it is applicable to
$G_2 \vert \Gamma _2, \Pi _2 ', p, \Pi _2 '', \Sigma _2 \Rightarrow p$. By
replacing $p$ with $t$,  we get $G_2 \vert \Gamma _2, \Pi _2 ', t, \Pi _2
'', \Sigma _2 \Rightarrow t$. But there exists no derivation of $G_1 \vert
G_2 \vert \Gamma _1, \Pi _2 ', \Gamma _2, \Pi _1 {\kern
1pt}, \Sigma _2, \Pi _2 '', \Sigma _1 \Rightarrow A_1 $ from $G_2 \vert \Gamma
_2, \Pi _2 ', \Pi _2 '', \Sigma _2 \Rightarrow t$ and $G_1 \vert \Gamma _1
, \Pi _1, \Sigma _1 \Rightarrow A_1 $. Notice that $\Gamma _2, \Pi _2 '$ and
$\Pi _2 '', \Sigma _2 $ in $G_2 \vert \Gamma _2, \Pi _2 ', p, \Pi _2 '', \Sigma
_2 \Rightarrow p$ are commutated simultaneously in\\
 $G_1 \vert G_2 \vert
\Gamma _1, \Pi _2 ', \Gamma _2, \Pi _1, \Sigma _2, \Pi _2 '', \Sigma _1 \Rightarrow A_1 $,  which we can't obtain
by  $(WCM)$. It seems that $(WCM)$ can't be
strengthened further in order to solve this difficulty. We overcome this
difficulty by introducing a restricted subsystem ${\rm {\bf GpsUL}}_\Omega $
of ${\rm {\bf GpsUL}}^{\rm {\bf \ast }}$. ${\rm {\bf GpsUL}}_\Omega $ is a
generalization of ${\rm {\bf GIUL}}_\Omega $,  which we introduced in [6] in
order to solve a longstanding open problem,  i.e.,  the standard completeness
of ${\rm {\bf IUL}}$. Two new manipulations,  which we call the
derivation-splitting operation and derivation-splicing operation,  are
introduced in ${\rm {\bf GpsUL}}_\Omega $.

The third difficulty we encounter is that the conditions of applying the
restricted external contraction rule $(EC_\Omega )$ become more complex in
${\rm {\bf GpsUL}}_\Omega $ because new derivation-splitting operations make
the conclusion of the generalized density rule to be a set of hypersequents
rather than one hypersequent. We continue to apply derivation-grafting
operations in the separation algorithm of the multiple branches of ${\rm
{\bf GIUL}}_\Omega $ in [6] but we have to introduce a new construction method for ${\rm {\bf GpsUL}}_\Omega $ by induction on the height of the complete set
of maximal $(pEC)$-nodes other than on the number of branches.

\section{${\rm {\bf GpsUL}}$,  ${\rm {\bf GpsUL}}^\ast $ and ${\rm {\bf
GpsUL}}_{\Omega } $ }

\begin{definition} ([1])
${\rm {\bf GpsUL}}$ consist of the following initial sequents and rules:

\noindent\textbf{Initial sequents}
\[
\cfrac{}{A\Rightarrow A}(ID)
\quad
\cfrac{}{\Rightarrow t}(t_r )
\quad\cfrac{}{\Gamma, \bot, \Delta \Rightarrow A} (\bot _l )
\quad
\cfrac{}{\Gamma \Rightarrow \top}(\top _r )
\]
\noindent\textbf{Structural Rules}
\[
\cfrac{G\vert \Gamma \Rightarrow A\vert \Gamma \Rightarrow A}{G\vert \Gamma
\Rightarrow A}(EC)
\quad
\cfrac{G}{G\vert \Gamma \Rightarrow A}(EW)
\]
\[
\cfrac{G_1 \vert \Gamma _1, \Pi _1, \Delta _1 \Rightarrow A_1
\quad G_2 \vert \Gamma _2, \Pi _2, \Delta _2 \Rightarrow
A_2 }{G_1 \vert G_2 \vert \Gamma _1, \Pi _2, \Delta _1 \Rightarrow A_1
\vert \Gamma _2, {\kern 1pt}\Pi _1, \Delta _2 \Rightarrow A_2 }(COM)
\]
\noindent\textbf{Logical Rules}\\
\begin{minipage}[l]{0.49\linewidth}
\begin{align*}
\cfrac{G_1 \vert \Gamma \Rightarrow A\quad G_2 \vert \Delta \Rightarrow
B}{G_1 \vert G_2 \vert \Gamma, \Delta \Rightarrow A\odot B}(\odot _r )&\\
\cfrac{G_1 \vert \Gamma, B, \Delta \Rightarrow C\quad G_2 \vert \Pi
\Rightarrow A}{G_1 \vert G_2 \vert \Gamma, \Pi, A\to B, \Delta
\Rightarrow C}(\to _l )&\\
\cfrac{G_1 \vert \Pi \Rightarrow A\quad G_2 \vert \Gamma, {\kern 1pt}B, \Delta
\Rightarrow C}{G_1 \vert G_2 \vert \Gamma, {\kern 1pt}A\rightsquigarrow  B, \Pi, \Delta
\Rightarrow C}(\rightsquigarrow  _l )&\\
\cfrac{G_1 \vert \Gamma, A, \Delta \Rightarrow C\quad G_2 \vert \Gamma
, B, \Delta \Rightarrow C}{G_1 \vert G_2 \vert \Gamma, A\vee B, \Delta
\Rightarrow C}(\vee _l )&\\
\cfrac{G_1 \vert \Gamma \Rightarrow A\quad G_2 \vert \Gamma \Rightarrow
B}{G_1 \vert G_2 \vert \Gamma \Rightarrow A\wedge B}(\wedge _l )&\\
\cfrac{G\vert \Gamma, A, \Delta \Rightarrow C}{G\vert \Gamma, A\wedge B, \Delta
\Rightarrow C}(\wedge _{rr} )&\\
\cfrac{G\vert \Gamma, \Delta \Rightarrow A}{G\vert \Gamma, t, \Delta
\Rightarrow A}(t_l )
\end{align*}
\end{minipage}
\begin{minipage}[r]{0.49\linewidth}
\begin{align*}
\quad
\cfrac{G\vert \Gamma, A, B, \Delta \Rightarrow C}{G\vert \Gamma, A\odot
B, \Delta \Rightarrow C}(\odot _l )&\\
\cfrac{G\vert A, \Gamma \Rightarrow B}{G\vert \Gamma \Rightarrow A\to B}(\to
_r )&\\
\cfrac{G\vert \Gamma, A\Rightarrow B}{G\vert \Gamma \Rightarrow A\rightsquigarrow
B}(\rightsquigarrow_r )&\\
\cfrac{G\vert \Gamma \Rightarrow A}{G\vert \Gamma \Rightarrow A\vee B}(\vee
_{rr} )&\\
\cfrac{G\vert \Gamma \Rightarrow B}{G\vert \Gamma \Rightarrow A\vee B}(\vee
_{rl} )&\\
\cfrac{G\vert \Gamma, B, \Delta \Rightarrow C}{G\vert \Gamma, A\wedge B, \Delta
\Rightarrow C}(\wedge _{rl} )&\\
\end{align*}
\end{minipage}

\noindent\textbf{Cut Rule}
\[
\cfrac{G_1 \vert \Gamma, A, \Delta \Rightarrow B\quad G_2 \vert
\Pi \Rightarrow A}{G_1 \vert G_2 \vert \Gamma, \Pi, \Delta \Rightarrow
B}(CUT).
\]
\end{definition}

\begin{definition} ([3])
${\rm {\bf GpsUL}}^\ast $ is ${\rm {\bf GpsUL}}$ plus the weakly commutativity
rule
\[
\cfrac{G\vert \Gamma, \Delta \Rightarrow t}{G\vert \Delta, \Gamma \Rightarrow
t}(WCM).
\]
\end{definition}

\begin{definition}
${\rm {\bf GpsUL}}^{\ast D}$ is ${\rm {\bf GpsUL}}^\ast $ plus the density rule $\cfrac{G\vert \Pi \Rightarrow p\vert \Gamma, p, \Delta \Rightarrow B}{G\vert \Gamma, \Pi, \Delta \Rightarrow B}(D)$.
\end{definition}

\begin{lemma}
$G\equiv B\vee ((D\to B)\odot C\odot (C\to D)\odot A\to A)$ is not
a theorem in ${\rm {\bf HpsUL}}$.
\end{lemma}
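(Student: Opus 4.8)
The plan is to prove non-theoremhood semantically, via the completeness of ${\rm {\bf HpsUL}}$ with respect to bounded representable residuated lattices recalled in the Introduction. Since every chain is trivially representable, it suffices to exhibit a single bounded residuated chain $\mathbf{C}$ with unit $e$ (interpreting $t$) and an evaluation $v$ for which $v(G)\not\ge e$. Writing $\to$ for the left residual $a\to b=\max\{x:a\odot x\le b\}$, in a chain one has $v(G)=v(B)\vee v(X)$ with $X=((D\to B)\odot C\odot(C\to D)\odot A)\to A$, and this is $<e$ precisely when $v(B)<e$ and $(D\to B)\odot C\odot(C\to D)\odot A> A$. Because $C\odot(C\to D)\le D$, the second condition forces $(D\to B)\odot D>e$ even though $D\odot(D\to B)\le B<e$, so the entire difficulty is concentrated in manufacturing this left/right asymmetry of the residual.

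A guiding observation is that the counter-model must be \emph{genuinely non-commutative}. Indeed, in any commutative chain, if $B\ge e$ then $v(G)\ge B\ge e$; and if $B<e$ then $(D\to B)\odot C\odot(C\to D)\odot A\le (D\to B)\odot D\odot A=D\odot(D\to B)\odot A\le B\odot A\le A$, whence $X\ge e$ and again $v(G)\ge e$. Thus $G$ is valid in every commutative (indeed every $\ell$-group) chain, and only the failure of $(D\to B)\odot D\le B$ available in the non-commutative setting can refute it.

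Concretely I would take the five-element chain $\bot<u<e<v<\top$ with $\odot$ given by
\[
\begin{array}{c|ccccc}
\odot & \bot & u & e & v & \top\\
\bot & \bot & \bot & \bot & \bot & \bot\\
u & \bot & u & u & v & \top\\
e & \bot & u & e & v & \top\\
v & \bot & u & v & v & \top\\
\top & \bot & \top & \top & \top & \top
\end{array}
\]
which is bounded, has unit $e$ and absorbing $\bot$, and is non-commutative since $u\odot v=v\neq u=v\odot u$. Setting $A=e$, $B=u$ and $C=D=v$, one computes $D\to B=v\to u=u$ and $C\to D=v\to v=v$, so the antecedent product is $u\odot v\odot v\odot e=v>e=A$, and $X=v\to e=u$; hence $v(G)=u\vee u=u<e$, establishing the lemma.

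The main obstacle is the construction itself. First, one must check that the proposed $\odot$ is associative: the delicate triples are exactly those mixing $u,v,\top$ (for instance $(\top\odot u)\odot v$ versus $\top\odot(u\odot v)$), and it is precisely these that dictate the values in the $\top$-row. Second, one must confirm that on a finite chain monotonicity in each argument together with $\bot$-absorption already yields residuation (monotone maps on a finite chain preserve all joins), so that $\mathbf{C}$ is a bona fide model of ${\rm {\bf HpsUL}}$. Once the table is pinned down, the residual and product computations above are routine.
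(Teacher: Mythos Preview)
Your proposal is correct and follows essentially the same strategy as the paper: refute $G$ by evaluating it in a concrete finite bounded residuated chain and invoking completeness. The paper uses a six-element chain (the one from Wang--Zhao) with unit $3$ and the assignment $v(A)=3$, $v(B)=v(C)=2$, $v(D)=5$, obtaining $v(G)=2<3$; you instead build a five-element chain whose non-unit, non-bound part $\{u,v\}$ carries the right-projection semigroup (so associativity is immediate there, and the $\top$- and $\bot$-rows are forced), and your assignment $A=e$, $B=u$, $C=D=v$ gives $v(G)=u<e$ just as you compute.

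Two minor remarks. First, what you call the ``left residual'' is, in the paper's conventions, exactly the $\to$ determined by $(\to_r)$, namely $a\to b=\max\{x:a\odot x\le b\}$, so your calculations line up with the intended semantics. Second, your preliminary observation that $G$ is valid in every \emph{commutative} chain (via $C\odot(C\to D)\le D$ and $D\odot(D\to B)\le B$) is a nice structural explanation, absent from the paper, of why a genuinely non-commutative witness is unavoidable; it is not needed for the proof but clarifies what the counter-model must achieve.
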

\begin{proof}
Let $\mathcal{A}=(\{0, 1, 2, 3, 4, 5\}, \wedge, \vee, \odot, \to
, \rightsquigarrow, 3, 0, 5)_{ }$be an algebra,  where $x\wedge y=\min
(x, y)$,  $x\vee y=\max (x, y)$ for all $x,  y\in
\{0, 1, 2, 3, 4, 5\}$,  and the binary operations $\odot $,
$\to $ and $\rightsquigarrow$ are defined by the following tables (See [2]).
\scriptsize
\begin{center}
\begin{minipage}[b]{0.32\textwidth}
\begin{tabular}[t]{*{8}{|l}}\hline
$\odot$&0&1&2&3&4&5 \\\hline $0$&0&0&0&0&0&0 \\\hline
$1$&0&0&1&1&1&1
\\\hline 2&0&1&2&2&2&5 \\\hline 3&0&1&2&3&4&5 \\\hline
4&0&1&2&4&4&5
\\\hline
5&0&1&2&5&5&5
\\\hline
\end{tabular}
\end{minipage}%
\begin{minipage}[b]{0.32\textwidth}
\begin{tabular}[t]{*{8}{|r}}\hline
$\rightarrow$&0&1&2&3&4&5 \\\hline $0$&5&5&5&5&5&5
\\\hline $1$&1&5&5&5&5&5
\\\hline 2&0&1&4&4&4&5 \\\hline 3&0&1&2&3&4&5 \\\hline
4&0&1&2&2&4&5
\\\hline
5&0&1&2&2&2&5
\\\hline
\end{tabular}
\end{minipage}%
\begin{minipage}[b]{0.32\textwidth}
\begin{tabular}[t]{*{8}{|r}}\hline
$\rightsquigarrow$&0&1&2&3&4&5 \\\hline $0$&5&5&5&5&5&5
\\\hline $1$&1&5&5&5&5&5
\\\hline 2&0&1&5&5&5&5 \\\hline 3&0&1&2&3&4&5 \\\hline
4&0&1&2&2&4&5
\\\hline
5&0&1&1&1&1&5
\\\hline
\end{tabular}
\end{minipage}\\
\end{center}
\normalsize
By easy calculation,  we get that $\mathcal{A}$ is a linearly ordered ${\rm
{\bf HpsUL}}$-algebra,  where $ 0$ and $5$ are the least and the greatest
element of $\mathcal{A}$,  respectively,  and 3 is its unit. Let $v(A)=3,  v(B)=v(C)=2,  v(D)=5$. Then $v(G)=2\vee(2\odot 2\odot 5\odot 3\to 3)=2<3$. Hence $G$ is not a tautology in ${\rm {\bf HpsUL}}$. Therefore it is not a theorem in ${\rm {\bf
HpsUL}}$ by Theorem 9.27 in [1].
\end{proof}

\begin{theorem}
Cut-elimination doesn't holds for ${\rm {\bf GpsUL}}^\ast $.
\end{theorem}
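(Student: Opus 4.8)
The plan is to refute cut-elimination by exhibiting a single end-sequent that is derivable in ${\rm {\bf GpsUL}}^\ast$ with $(CUT)$ but has no cut-free derivation, and the natural witness is $\Rightarrow G$ for the formula $G=B\vee((D\to B)\odot C\odot(C\to D)\odot A\to A)$ of the preceding Lemma. Writing $M=(D\to B)\odot C\odot(C\to D)\odot A$, I would first reduce $\Rightarrow G$ to the hypersequent $\Rightarrow B\vert (D\to B),C,(C\to D),A\Rightarrow A$: from the latter, three applications of $(\odot_l)$ and one of $(\to_r)$ give $\Rightarrow B\vert\;\Rightarrow M\to A$, and then $(\vee_{rr})$, $(\vee_{rl})$ and $(EC)$ collapse the two components to $\Rightarrow G$. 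So everything reduces to deriving this hypersequent.

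For the upper half I would build $\Rightarrow B\vert (D\to B),C,(C\to D),A\Rightarrow A$ exactly along the lines sketched in the Introduction, using $(WCM)$ on a $t$-headed component produced by $(COM)$. Concretely, $(\to_l)$ yields $D,(D\to B)\Rightarrow B$; applying $(COM)$ to this sequent and the axiom $\Rightarrow t$, moving the whole antecedent $D,(D\to B)$, produces $\Rightarrow B\vert D,(D\to B)\Rightarrow t$; now $(WCM)$ commutes the second component to $\Rightarrow B\vert (D\to B),D\Rightarrow t$, which is precisely the step that is unavailable unless the right-hand side is $t$. Two cuts then finish the construction: cutting against $C,(C\to D)\Rightarrow D$ (itself an instance of $(\to_l)$) replaces $D$ by $C,(C\to D)$, giving $\Rightarrow B\vert (D\to B),C,(C\to D)\Rightarrow t$, and cutting against $t,A\Rightarrow A$ (an instance of $(t_l)$) appends $A$ and consumes the $t$, producing the desired hypersequent. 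This establishes derivability of $\Rightarrow G$ in ${\rm {\bf GpsUL}}^\ast$ with $(CUT)$.

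The harder half is to show $\Rightarrow G$ has no cut-free derivation, and here the key observation is that the constant $t$ does not occur as a subformula of $G$. Cut-free hypersequent derivations in ${\rm {\bf GpsUL}}^\ast$ enjoy the subformula property, since every rule other than $(CUT)$ is analytic, i.e.\ each formula in a premise is a subformula of a formula in the conclusion; hence every formula occurring in a cut-free derivation of $\Rightarrow G$ is a subformula of $G$. As $t\notin\mathrm{Sub}(G)$, no sequent in such a derivation can contain $t$, so the rules $(t_r)$, $(t_l)$ and $(WCM)$, each of which forces an occurrence of $t$, are never applied. Consequently a cut-free ${\rm {\bf GpsUL}}^\ast$ derivation of $\Rightarrow G$ is already a ${\rm {\bf GpsUL}}$ derivation, whence ${\rm {\bf HpsUL}}\vdash G$ by the equivalence of the Hilbert and hypersequent presentations in [1]; equivalently, ${\rm {\bf GpsUL}}$ is sound for the linearly ordered ${\rm {\bf HpsUL}}$-algebra $\mathcal A$ of the Lemma, so $\Rightarrow G$ would be valid in $\mathcal A$. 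Either route contradicts the Lemma, where $v(G)=2<3=v(t)$ witnesses the failure of $G$, so $\Rightarrow G$ is cut-ful but not cut-free derivable and cut-elimination fails.

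I expect the delicate point to be this lower bound rather than the construction. One must be sure that the subformula property really does block every use of $(WCM)$, that is, that a $t$-headed component, once created, can never be reshaped by a right rule or merged by $(EC)$ into the single $t$-free end-component $\Rightarrow G$; the clean way to see this is precisely the analyticity check above, which shows that $t$ cannot enter a cut-free proof of a $t$-free sequent at all. The construction in the first half, by contrast, is routine once one notices that $(COM)$ is what supplies the $t$-headed component on which $(WCM)$ can legitimately act, and that the two cuts merely transport a product known to lie below $t$ into the antecedent of $A$.
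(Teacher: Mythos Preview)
Your proposal is correct and follows essentially the same approach as the paper: the same witness $G$, the same trick of using $(COM)$ against $\Rightarrow t$ to create a $t$-headed component on which $(WCM)$ can act, then $(CUT)$ to move the result into an $A$-headed sequent, and finally the same subformula-property argument to rule out a cut-free proof via Lemma~2.4. The only cosmetic difference is that the paper inserts $C,C\to D$ via $(\to_l)$ before applying $(WCM)$ and then uses a single cut on $t$, whereas you apply $(WCM)$ earlier and use an additional cut on $D$; both derivations are valid.
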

\begin{proof}
$G\equiv \Rightarrow B\vee ((D\to B)\odot C\odot
(C\to D)\odot A\to A)$ is provable in ${\rm {\bf GpsUL}}^\ast $,  as shown in Figure
1.
\scriptsize
\[
\cfrac{\cfrac{\cfrac{\cfrac{A\Rightarrow A}{t, A\Rightarrow A}(t_l )\, \, \, \cfrac{\cfrac{\cfrac{{\kern
1pt}B\Rightarrow B{\kern 1pt}{\kern 1pt}{\kern 1pt}{\kern 1pt}{\kern
1pt}{\kern 1pt}{\kern 1pt}{\kern 1pt}{\kern 1pt}{\kern 1pt}{\kern 1pt}{\kern
1pt}{\kern 1pt}{\kern 1pt}{\kern 1pt}{\kern 1pt}{\kern 1pt}{\kern 1pt}{\kern
1pt}{\kern 1pt}{\kern 1pt}{\kern 1pt}{\kern 1pt}{\kern 1pt}\Rightarrow
t}{{\kern 1pt}\Rightarrow B{\kern 1pt}{\kern 1pt}\vert {\kern 1pt}{\kern
1pt}B\Rightarrow t}(COM)\, \, \, \cfrac{{\kern 1pt}C\Rightarrow C{\kern
1pt}{\kern 1pt}{\kern 1pt}{\kern 1pt}{\kern 1pt}{\kern 1pt}{\kern 1pt}{\kern
1pt}{\kern 1pt}{\kern 1pt}{\kern 1pt}{\kern 1pt}D\Rightarrow D}{{\kern
1pt}C, C\to D\Rightarrow D}(\to _l )}{{\kern 1pt}\Rightarrow B{\kern
1pt}{\kern 1pt}\vert {\kern 1pt}{\kern 1pt}C, C\to D, D\to B\Rightarrow t}(\to
_l )}{\Rightarrow B{\kern 1pt}{\kern 1pt}\vert {\kern 1pt}{\kern 1pt}D\to
B, C, C\to D\Rightarrow t}(WCM){\kern 1pt}{\kern 1pt}{\kern 1pt}{\kern
1pt}}{\Rightarrow B{\kern 1pt}{\kern 1pt}\vert {\kern 1pt}{\kern 1pt}D\to
B, C, C\to D, A\Rightarrow A}(CUT)}{\Rightarrow B{\kern 1pt}{\kern 1pt}\vert
{\kern 1pt}{\kern 1pt}\Rightarrow (D\to B)\odot C\odot (C\to D)\odot A\to
A}(\odot _l^\ast, \to _r )}{\Rightarrow B{\kern 1pt}\vee ((D\to B)\odot
C\odot (C\to D)\odot A\to A)}(\vee _{rr}, \vee _{rl}, EC).
\]
\normalsize
\begin{center}
Figure 1 A proof $\tau $ of $G$
\end{center}

Suppose that $G$ has a cut-free proof $\rho $. Then there exists no
occurrence of $t$ in $\rho $ by its subformula property. Thus there exists
no application of $(WCM){\kern 1pt}$ in $\rho $. Hence $G$ is a theorem of
${\rm {\bf GpsUL}}$,  which contradicts Lemma 2.4.
\end{proof}

\begin{remark}
 Following the construction given in the proof of Theorem
53 in [4],  $(CUT)$ in the figure 1 is eliminated by the following
derivation. However,  the application of $(WCM)$ in $\rho $ is invalid,  which
illustrates the reason why the cut-elimination theorem doesn't hold in
 ${\rm {\bf GpsUL}}^\ast$.
\[
\cfrac{\cfrac{\cfrac{\cfrac{\cfrac{{\kern 1pt}B\Rightarrow B{\kern 1pt}{\kern
1pt}{\kern 1pt}{\kern 1pt}{\kern 1pt}{\kern 1pt}{\kern 1pt}{\kern 1pt}{\kern
1pt}{\kern 1pt}{\kern 1pt}{\kern 1pt}{\kern 1pt}{\kern 1pt}{\kern 1pt}{\kern
1pt}{\kern 1pt}{\kern 1pt}{\kern 1pt}{\kern 1pt}{\kern 1pt}{\kern 1pt}{\kern
1pt}{\kern 1pt}A\Rightarrow A}{{\kern 1pt}\Rightarrow B{\kern 1pt}{\kern
1pt}\vert {\kern 1pt}{\kern 1pt}B, A\Rightarrow A}(COM){\kern 1pt}{\kern
1pt}{\kern 1pt}{\kern 1pt}{\kern 1pt}{\kern 1pt}{\kern 1pt}{\kern 1pt}{\kern
1pt}{\kern 1pt}{\kern 1pt}{\kern 1pt}{\kern 1pt}{\kern 1pt}{\kern 1pt}{\kern
1pt}{\kern 1pt}{\kern 1pt}{\kern 1pt}{\kern 1pt}{\kern 1pt}{\kern
1pt}\cfrac{{\kern 1pt}C\Rightarrow C{\kern 1pt}{\kern 1pt}{\kern 1pt}{\kern
1pt}{\kern 1pt}{\kern 1pt}{\kern 1pt}{\kern 1pt}{\kern 1pt}{\kern 1pt}{\kern
1pt}{\kern 1pt}D\Rightarrow D}{{\kern 1pt}C, C\to D\Rightarrow D}(\to _l
)}{{\kern 1pt}\Rightarrow B{\kern 1pt}{\kern 1pt}\vert {\kern 1pt}{\kern
1pt}C, C\to D, D\to B, A\Rightarrow A}(\to _l )}{\Rightarrow B{\kern 1pt}{\kern
1pt}\vert {\kern 1pt}{\kern 1pt}D\to B, C, C\to D, A\Rightarrow
A}(WCM)}{\Rightarrow B{\kern 1pt}{\kern 1pt}\vert {\kern 1pt}{\kern
1pt}\Rightarrow (D\to B)\odot C\odot (C\to D)\odot A\to A}(\odot _l^\ast
, \to _r )}{\Rightarrow B{\kern 1pt}\vee ((D\to B)\odot C\odot (C\to D)\odot
A\to A)}(\vee _{rr}, \vee _{rl}, EC)
\]
\begin{center}
Figure 2 A possible cut-free proof $\rho $ of $G$
\end{center}
\end{remark}

\begin{definition}
${{\rm {\bf GpsUL}}^{\ast\ast}} $ is constructed by replacing $(CUT)$ in ${\rm {\bf GpsUL}}^\ast$ with
\[\cfrac{G_1
\vert \Gamma, t, \Delta \Rightarrow A{\kern 1pt}{\kern 1pt}{\kern 1pt}{\kern
1pt}{\kern 1pt}{\kern 1pt}{\kern 1pt}G_2 \vert \Pi \Rightarrow t{\kern 1pt}{\kern
1pt}}{G_1 \vert G_2 \vert \Gamma, \Pi, \Delta \Rightarrow A}(WCT).\]
We call it the weakly cut rule and,   denote by $(WCT)$.
\end{definition}

\begin{theorem}
If $\vdash _{{{\rm {\bf GpsUL}}^{\ast}} } G$,  then $\vdash _{{{\rm {\bf GpsUL}}^{\ast\ast}}} G$.
\end{theorem}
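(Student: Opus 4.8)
The plan is to prove the theorem by establishing that the full cut rule $(CUT)$ is \emph{admissible} in ${{\rm {\bf GpsUL}}^{\ast\ast}}$. Since ${{\rm {\bf GpsUL}}^{\ast}}$ and ${{\rm {\bf GpsUL}}^{\ast\ast}}$ share every initial sequent and every structural and logical rule, and since $(WCT)$ is exactly the instance of $(CUT)$ whose cut formula is $t$, the two calculi differ only in their cut rules. Hence, given a derivation of $G$ in ${{\rm {\bf GpsUL}}^{\ast}}$, I would process its applications of $(CUT)$ starting from a topmost one (whose two premises are already derivable in ${{\rm {\bf GpsUL}}^{\ast\ast}}$) and replace it by the ${{\rm {\bf GpsUL}}^{\ast\ast}}$-derivation supplied by the admissibility argument; iterating removes every $(CUT)$.

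First I would set up the usual Gentzen-style induction: a primary induction on the number of connectives of the cut formula $A$, and, for fixed $A$, a secondary induction on the sum of the heights of the derivations $d_1,d_2$ of the premises $G_1\mid \Gamma,A,\Delta\Rightarrow B$ and $G_2\mid\Pi\Rightarrow A$. The argument then splits into the standard families of cases. In the \emph{axiom cases} one premise is an initial sequent and the conclusion is read off directly (using $(\bot_l)$, $(\top_r)$ or the identity axiom). In the \emph{principal cases}, where $A$ is introduced by a logical rule in the last step of both $d_1$ and $d_2$, the cut is replaced by one or two cuts on the immediate subformulas of $A$; for instance a cut on $A_1\odot A_2$ introduced by $(\odot_r)$ on the right and $(\odot_l)$ on the left reduces to successive cuts on $A_1$ and $A_2$, and similarly for $\to,\rightsquigarrow,\wedge,\vee$. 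Each such cut has strictly smaller primary measure, so the primary induction hypothesis applies. In the \emph{commutative cases}, where $A$ is not principal in the last rule of $d_1$ or of $d_2$, I would permute the cut above that rule; pushing it into the single premise that actually contains the cut formula keeps the secondary measure strictly smaller, and the cases for $(EW)$, $(EC)$, $(COM)$, $(WCM)$ and the remaining logical rules are discharged one by one.

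The decisive case, and the one I expect to be the main obstacle, is $A=t$. When $t$ is principal on the left it has been introduced by $(t_l)$ from $G_1\mid\Gamma,\Delta\Rightarrow B$, while the right premise is $G_2\mid\Pi\Rightarrow t$; the required conclusion $G_1\mid G_2\mid\Gamma,\Pi,\Delta\Rightarrow B$ is then literally an instance of the primitive rule $(WCT)$, so this residual cut is not eliminated but simply \emph{named} $(WCT)$. This is precisely the configuration that blocks ordinary cut-elimination in ${{\rm {\bf GpsUL}}^{\ast}}$: as the derivation fragment in the Introduction shows, permuting a $t$-cut above $(WCM)$ would require commuting two whole context blocks $\Gamma_2$ and $\Delta_2$, which $(WCM)$ cannot perform, so no reduction to a smaller cut exists. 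The subtle bookkeeping therefore lies in the commutative cases for $(COM)$ and $(WCM)$, where the hypersequent components are redistributed or reordered and the cut formula may migrate between components, and in verifying that in every such case either the induction measure strictly decreases or the step terminates in an application of $(WCT)$. Because $(WCT)$ is available as a primitive rule of ${{\rm {\bf GpsUL}}^{\ast\ast}}$, all these residual $t$-cuts are admissible there, and the induction closes.
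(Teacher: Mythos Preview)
Your proposal is correct and follows essentially the same route as the paper, which simply defers to ``a procedure similar to that of Theorem 53 in [4]'' (the standard hypersequent cut-elimination argument of Metcalfe and Montagna) and omits the details. Two small refinements: since $(WCT)$ is literally $(CUT)$ with cut formula $t$, the case $A=t$ terminates \emph{immediately} without any principal-case analysis, so your discussion of $(t_l)$ there is unnecessary (though harmless); and because the premises $d_1,d_2$ live in ${\rm {\bf GpsUL}}^{\ast\ast}$, you must also include $(WCT)$ itself among the rules through which the cut is permuted in the commutative cases---this is routine (push the cut into whichever $(WCT)$-premise carries the cut formula, then reapply $(WCT)$), but it should be listed alongside $(EW)$, $(EC)$, $(COM)$, and $(WCM)$.
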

\begin{proof}
It is proved by a procedure similar to that of Theorem 53 in
[4] and omitted.
\end{proof}

\begin{definition}([6])
${\rm {\bf GpsUL}}_{\Omega } $ is a restricted subsystem of ${\rm {\bf GpsUL}}^\ast$
such that

(i) $p$ is designated as the unique eigenvariable by which we means that it
does not be used to built up any formula containing logical connectives and
only used as a sequent-formula.

(ii) Each occurrence of $p$ in a hypersequent is assigned one unique
identification number $i$ in ${\rm {\bf GpsUL}}_{\Omega } $ and written as
$p_i $. Initial sequent $p\Rightarrow p$ of ${\rm {\bf GpsUL}}^{\ast}$ has the form
$p_i \Rightarrow p_i $ in ${\rm {\bf GpsUL}}_{\Omega }$.  $p$ doesn't
occur in $A,  \Gamma $ or $\Delta $ for each initial sequent $\Gamma, \bot
, \Delta \Rightarrow A$ or $\Gamma \Rightarrow \top $ in ${\rm {\bf GpsUL}}_{\Omega }$.

(iii) Each sequent $S$ of the form $\Gamma _0, p, \Gamma _1, \cdots, \Gamma
_{\lambda -1}, p, \Gamma _\lambda \Rightarrow A$ in ${\rm {\bf GpsUL}}^{\ast }$ has the form $\Gamma _0, p_{i_1 }, \Gamma _1, \cdots, \Gamma _{\lambda -1}
, p_{i_\lambda }, \Gamma _\lambda \Rightarrow A$ in ${\rm {\bf GpsUL}}_{\Omega }$,  where $p$ does not occur in $\Gamma _k $ for all
$0\leqslant k\leqslant \lambda$ and,  $i_k \ne i_l $ for all
$1\leqslant k<l\leqslant \lambda $. Define $v_l (S)=\{i_1, \cdots, i_\lambda
\}$,  $v_r (S)=\{j_1 \}$ if $A$ is an eigenvariable with the identification
number $j_1 $ and,  $v_r (S)=\emptyset $ if $A$ isn't an eigenvariable.

Let $G$ be a hypersequent of ${\rm {\bf GpsUL}}_{\Omega }$ in the form
$S_1 \vert \cdots \vert S_n $ then $v_l (S_k )\bigcap v_l (S_l )=\emptyset $
and $v_r (S_k )\bigcap v_r (S_l )=\emptyset $ for all $1\leqslant k<l\leqslant
n$. Define $v_l (G)=\bigcup _{k=1}^n v_l (S_k )$,  $v_r (G)=\bigcup _{k=1}^n v_r
(S_k )$.

(iv) A hypersequent $G$ of ${\rm {\bf GpsUL}}_{\Omega }$ is called closed
if $v_l (G)=v_r (G)$. Two hypersequents $G'$ and $G''$ of ${\rm {\bf GpsUL}}_{\Omega }$ are called disjoint if $v_l (G')\bigcap v_l
(G'')=\emptyset $, $v_l (G')\bigcap v_r (G'')=\emptyset $,  \\
$v_r (G')\bigcap v_l
(G'')=\emptyset $ and $v_r (G')\bigcap v_r (G'')=\emptyset $.  $G''$ is a copy
of $G'$ if they are disjoint and there exist two bijections $\sigma _l: v_l
(G')\to v_l (G'')$ and $\sigma _r: v_r (G')\to v_r (G'')$ such that $G''$
can be obtained by applying $\sigma _l $ to antecedents of sequents in $G'$
and $\sigma _r $ to succedents of sequents in $G'$.

(v) A hypersequent $G\vert G_1 \vert G_2 $ can be contracted as $G\vert G_1
$ in ${\rm {\bf GpsUL}}_{\Omega } $ under certain condition given in
Construction 3.15,  which we called the constraint external contraction rule and
denote by $\cfrac{G'\vert G_1 \vert G_2
}{G'\vert G_1 }(EC_\Omega )$.

(vi) $(EW)$ is forbidden in ${\rm {\bf GpsUL}}_{\Omega } $ and,  $(EC)$ and
$(CUT)$ are replaced with $(EC_\Omega )$ and $(WCT)$,   respectively.

(vii) Two rules $(\wedge _r )$ and $(\vee _l )$ of ${\rm {\bf GL}}$ are
replaced with $\cfrac{G_1 \vert \Gamma _1 \Rightarrow A\quad G_2 \vert \Gamma
_2 \Rightarrow B}{G_1 \vert G_2 \vert \Gamma _1 \Rightarrow A\wedge B\vert
\Gamma _2 \Rightarrow A\wedge B}(\wedge _{rw} )$ and
$\cfrac{G_1 \vert \Gamma _1, A, \Delta _1 \Rightarrow C_1 \quad G_2 \vert
\Gamma _2, B,\Delta _2 \Rightarrow C_2 }{G_1 \vert G_2 \vert \Gamma _1
, A\vee B, \Delta _1 \Rightarrow C_1 \vert \Gamma _2, A\vee B, \Delta _2
\Rightarrow C_2 }(\vee _{lw} )$ in ${\rm {\bf GpsUL}}_{\Omega }$,
respectively.

(viii) $G_1 \vert S_1$ and $G_2 \vert S_2 $ are closed and
disjoint  for each two-premise inference rule \\
$\cfrac{G_1\vert S_1\, \, \, G_2\vert S_2 }{G_1\vert G_2\vert H'}(II)$ of ${\rm {\bf GpsUL}}_{\Omega }$ and,  $G'\vert S'$ is closed
for each one-premise inference rule $\cfrac{G'\vert S'}{G'\vert S''}(I)$.
\end{definition}

\begin{proposition}
Let $\cfrac{G'\vert S'{\kern 1pt}{\kern 1pt}{\kern 1pt}{\kern 1pt}{\kern
1pt}}{G'\vert S''}(I)$ and $\cfrac{G_1 \vert S_1 {\kern 1pt}{\kern
1pt}{\kern 1pt}{\kern 1pt}{\kern 1pt}G_2 \vert S_2 }{G_1 \vert G_2 \vert
H'}(II)$ be inference rules of ${\rm {\bf GpsUL}}_{\Omega }$ then
$v_l
(G'\vert S'')=v_r (G'\vert S'')=v_r (G'\vert S')=v_l (G'\vert S')$ and $v_l
(G_1 \vert G_2 \vert H')=v_l (G_1 \vert S_1 )\bigcup v_l (G_2 \vert S_2 )=$
$v_r (G_1 \vert G_2 \vert H')=v_r (G_1 \vert S_1 )\bigcup v_r (G_2 \vert S_2
)$.
\end{proposition}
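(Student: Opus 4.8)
\section*{Proof proposal}

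The plan is to prove both displayed chains by a uniform case analysis over the inference rules of ${\rm {\bf GpsUL}}_\Omega$, the engine of which is clause (i) in the definition of ${\rm {\bf GpsUL}}_\Omega$: the eigenvariable $p$ never occurs inside a formula built from logical connectives, but only as a sequent-formula. The single observation behind every case is therefore: no principal or auxiliary formula of a logical rule, and neither the constant $t$ nor $\top$, is an eigenvariable, so none of them carries an identification number and none contributes to $v_l$ or to $v_r$. Consequently, whenever a rule contracts side formulas into a compound formula (or introduces/erases a connective) the formulas involved are invisible to the bookkeeping, and whenever a rule merely permutes antecedents (as in $(COM)$ or $(WCM)$) or inserts $t$ (as in $(t_l)$, $(WCT)$) the multiset of eigenvariable-occurrences is unchanged.

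First I treat the one-premise rule $(I)$. By clause (viii) the premise $G'\vert S'$ is closed, so $v_l(G'\vert S')=v_r(G'\vert S')$, which already supplies the rightmost equality of the first chain. Since $v_l(G'\vert S')=v_l(G')\cup v_l(S')$ and $v_l(G'\vert S'')=v_l(G')\cup v_l(S'')$ (and similarly for $v_r$), with $G'$ carried over unchanged, it suffices to verify $v_l(S')=v_l(S'')$ and $v_r(S')=v_r(S'')$ rule by rule. For $(\odot_l)$, $(\to_r)$, $(\rightsquigarrow_r)$, $(\vee_{rr})$, $(\vee_{rl})$, $(\wedge_{rr})$, $(\wedge_{rl})$ the formulas deleted from or created in $S'$ versus $S''$ are all compound, or proper subformulas of a compound formula, hence not eigenvariables; for $(t_l)$ the inserted $t$ is not an eigenvariable; for $(WCM)$ the antecedent is only permuted and the succedent $t$ is not an eigenvariable. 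In every case $v_l(S')=v_l(S'')$ and $v_r(S')=v_r(S'')$, whence $v_l(G'\vert S'')=v_l(G'\vert S')=v_r(G'\vert S')=v_r(G'\vert S'')$, which is the first chain.

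Next I treat the two-premise rule $(II)$. By clause (viii) both premises are closed, so $v_l(G_i\vert S_i)=v_r(G_i\vert S_i)$ for $i=1,2$; taking unions gives the middle equality $v_l(G_1\vert S_1)\cup v_l(G_2\vert S_2)=v_r(G_1\vert S_1)\cup v_r(G_2\vert S_2)$. Since $v_l(G_1\vert G_2\vert H')=v_l(G_1)\cup v_l(G_2)\cup v_l(H')$ and likewise for $v_r$, the two outer equalities reduce to $v_l(H')=v_l(S_1)\cup v_l(S_2)$ and $v_r(H')=v_r(S_1)\cup v_r(S_2)$, again a per-rule check. For $(\odot_r)$, $(\to_l)$, $(\rightsquigarrow_l)$, $(\wedge_{rw})$, $(\vee_{lw})$ every newly formed or consumed formula is compound, so the eigenvariable-occurrences in the antecedents of $H'$ are exactly those of the antecedents of $S_1,S_2$ taken together, while a succedent appearing in $H'$ contributes the same identification number (if any) as the corresponding premise succedent and a consumed premise succedent, being a proper subformula, contributes nothing. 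For $(WCT)$ the cut-formula $t$ is not an eigenvariable, so it is transparent on both sides. Combining the two union identities with the closedness of the premises yields $v_l(G_1\vert G_2\vert H')=v_l(G_1\vert S_1)\cup v_l(G_2\vert S_2)=v_r(G_1\vert S_1)\cup v_r(G_2\vert S_2)=v_r(G_1\vert G_2\vert H')$, the second chain.

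The argument is purely bookkeeping, and the only case demanding genuine care is $(COM)$: here the contexts $\Pi_1,\Pi_2$ are swapped between the two conclusion sequents, so one must resist comparing $v_l$ component-by-component and instead track the total multiset of eigenvariable-occurrences. The swap redistributes eigenvariables across the components of $H'$ but preserves their union, and leaves the succedents $A_1,A_2$ intact; disjointness of the premises (clause (viii)) guarantees in addition that the resulting $G_1\vert G_2\vert H'$ is a well-formed hypersequent of ${\rm {\bf GpsUL}}_\Omega$, so that $v_l(H')=v_l(S_1)\cup v_l(S_2)$ and $v_r(H')=v_r(S_1)\cup v_r(S_2)$ hold as required.
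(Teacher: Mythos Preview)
Your proof is correct and rests on the same observation as the paper's: the eigenvariable $p$ never appears as a subformula of a compound connective and is distinct from the constant $t$, so no rule can create or destroy an identification number. The paper's own argument, however, is a single sentence that singles out only the $(WCT)$ case (``$t$ is a constant \dots\ distinguished from propositional variables'') and leaves everything else implicit, whereas you carry out the full per-rule bookkeeping and additionally flag $(COM)$ as the step where one must compare unions rather than components. Your version is more thorough but not a different route---it simply makes explicit what the paper takes for granted.
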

\begin{proof} Although $(WCT)$ makes $t$'s in its premises disappear in
its conclusion,  it has no effect on identification numbers of the
eigenvariable $p$ in a hypersequent because $t$ is a constant in ${\rm {\bf GpsUL}}_{\Omega }$ and distinguished from propositional variables.
\end{proof}

\begin{definition} [1]
Let $G$ be a closed hypersequent of ${\rm {\bf GpsUL}}_{\Omega }$ and $S\in G$. $[S]_G: =\bigcap \{H: S\in H\subseteq G, v_l
(H)=v_r (H)\}$ is called a minimal closed unit of $G$.
\end{definition}

\section{The generalized density rule $(\mathcal{D})$ for ${\rm {\bf GpsUL}}_{\rm
{\bf \Omega }} $}

In this section,   ${\rm {\bf GL}}_{\rm {\bf \Omega }}^{{\rm {\bf cf}}} $
is ${\rm {\bf G}}_{{\rm {\bf ps}}} {\rm {\bf UL}}{\Omega } $
without $(EC_\Omega )$.  Generally,  $A, B, C, \cdots $,  denote a formula other
than an eigenvariable $p_i $.

\begin{construction}
Given a proof $\tau ^\ast $ of $H\equiv G\vert \Gamma, p_j, \Delta
\Rightarrow p_j $ in ${\rm {\bf GL}}_{\rm {\bf \Omega }}^{{\rm {\bf cf}}} $,
let $Th_{\tau ^\ast } (p_j \Rightarrow p_j )=\left( {H_0, \cdots, H_n }
\right)$,  where $H_0 \equiv p_j \Rightarrow p_j $, $H_n \equiv H$. By $\Gamma
_k, p_j, \Delta _k \Rightarrow p_j $ we denote the sequent containing $p_j $
in $H_k $. Then $\Gamma _0 =\emptyset $,   $\Delta _0 =\emptyset $,  $\Gamma _n
=\Gamma $ and $\Delta _n =\Delta $. Hypersequents $\left\langle {H_k }
\right\rangle _j^- $,  $\left\langle {H_k } \right\rangle _j^+ $ and their
proofs $\langle \tau^\ast\rangle _{j}^{-} \left( {\left\langle {H_k } \right\rangle _j^-
} \right)$,  $\langle \tau^\ast\rangle _{j}^{+} \left( {\left\langle {H_k } \right\rangle
_j^+ } \right)$ are constructed inductively for all $0\leqslant
k\leqslant n$ in the following such that $\Gamma _k \Rightarrow t\in
\left\langle {H_k } \right\rangle _j^- $, $\Delta _k \Rightarrow t\in
\left\langle {H_k } \right\rangle _j^+ $,  and $\left\langle {H_k }
\right\rangle _j^+ \backslash \{\Delta _k \Rightarrow t\}\vert \left\langle
{H_k } \right\rangle _j^- \backslash \{\Gamma _k \Rightarrow t\}=H_k
\backslash \{\Gamma _k, p_j, \Delta _k \Rightarrow p_j \}$.

(i) $\left\langle {H_0 } \right\rangle _j^-: =\left\langle {H_0 }
\right\rangle _j^+: =\Rightarrow t$,  $\langle \tau^\ast\rangle _{j}^{-}\left( {\left\langle
{H_0 } \right\rangle _j^- } \right)$ and $\langle \tau^\ast\rangle _{j}^{+}\left(
{\left\langle {H_0 } \right\rangle _j^+ } \right)$ are built up with
$\Rightarrow t$.

(ii) Let $\cfrac{G'\vert S'\, \, \, G''\vert S''{\kern 1pt}{\kern
1pt}{\kern 1pt}{\kern 1pt}}{G'\vert G''\vert H'{\kern 1pt}}(II)$ (or
$\cfrac{G'\vert S'{\kern 1pt}{\kern 1pt}{\kern 1pt}{\kern 1pt}{\kern
1pt}{\kern 1pt}{\kern 1pt}{\kern 1pt}}{G'\vert S''{\kern 1pt}{\kern
1pt}{\kern 1pt}}(I))$ be in $\tau ^\ast $,  $H_k =G'\vert S'{\kern 1pt}$ and
$H_{k+1} =G'\vert G''\vert H'$ (accordingly $H_{k+1} =G'\vert S''{\kern
1pt}{\kern 1pt}$ for $(I))$ for some $0\leqslant k\leqslant n-1$. There are
three cases to be considered.

\textbf{Case 1} $S'=\Gamma _k, p_j, \Delta _k \Rightarrow p_j $.
If all focus formula(s) of $S'{\kern 1pt}$ is (are) contained in $\Gamma _k
$,
 \[\left\langle {H_{k+1} } \right\rangle _j^-: =\left( {\left\langle {H_k }
\right\rangle _j^- \backslash \{\Gamma _k \Rightarrow t\}} \right)\vert
G''\vert H'\backslash \{\Gamma _{k+1}, p_j, \Delta _{k{+}1}
\Rightarrow p_j \}\vert \Gamma _{k+1} \Rightarrow t\]
\[\left\langle {H_{k+1}
} \right\rangle _j^+: =\left\langle {H_k }
\right\rangle _j^+\]
(accordingly $\left\langle {H_{k+1} } \right\rangle
_j^- =\left\langle {H_k } \right\rangle _j^- \backslash \{\Gamma _k
\Rightarrow t\}\vert \Gamma _{k+1} \Rightarrow t$ for $(I)$)
and,  $\langle \tau^\ast\rangle _{j}^{-}\left( {\left\langle {H_{k+1} } \right\rangle _j^- }
\right)$ is constructed by combining the derivation $\langle \tau^\ast\rangle _{j}^{-}\left(
{\left\langle {H_k } \right\rangle _j^- } \right)$ and $\cfrac{\left\langle
{H_k } \right\rangle _j^-\, \, \, G''\vert S'{\kern
1pt}'{\kern 1pt}{\kern 1pt}{\kern 1pt}{\kern 1pt}{\kern 1pt}}{\left\langle
{H_{k+1} } \right\rangle _j^- {\kern 1pt}{\kern 1pt}}(II)$
(accordingly $\cfrac{\left\langle {H_k } \right\rangle _j^- {\kern 1pt}{\kern
1pt}{\kern 1pt}{\kern 1pt}}{\left\langle {H_{k+1} } \right\rangle _j^-
{\kern 1pt}{\kern 1pt}}(I){\kern 1pt}$ for $(I))$ and,  $\langle \tau^\ast\rangle _{j}^{+} \left( {\left\langle {H_{k+1} } \right\rangle _j^+ } \right)$ is
constructed by combining $\langle \tau^\ast\rangle _{j}^{+}\left( {\left\langle {H_k }
\right\rangle _j^+ } \right)$ and $\cfrac{\left\langle {H_k } \right\rangle
_j^+ {\kern 1pt}{\kern 1pt}{\kern 1pt}{\kern 1pt}}{\left\langle {H_{k+1} }
\right\rangle _j^+ {\kern 1pt}}(ID_\Omega ){\kern 1pt}$. The case of all
focus formula(s) of $S'$ contained in $\Delta _k $ is dealt with
by a procedure dual to above and omitted.

\textbf{Case 2} $S'\in \left\langle {H_k } \right\rangle _j^- $.
$\left\langle {H_{k+1} } \right\rangle _j^-: =\left( {\left\langle {H_k }
\right\rangle _j^- \backslash \{S'\}} \right)\vert G''\vert H'$ (accordingly\\
$\left\langle {H_{k+1} } \right\rangle _j^- =\left\langle {H_k }
\right\rangle _j^- \backslash \{S'\}\vert S''$ for $(I))$,
$\left\langle {H_{k+1} } \right\rangle _j^+: =\left\langle {H_k } \right\rangle _j^+ $ and $\langle \tau^\ast\rangle _{j}^{-}\left({\left\langle {H_{k+1} } \right\rangle _j^- } \right)$ is constructed by
combining the derivation
$\langle \tau^\ast\rangle _{j}^{-}\left( {\left\langle {H_k }
\right\rangle _j^- } \right)$ and $\cfrac{\left\langle {H_k } \right\rangle
_j^- \, \, \, G''\vert S'{\kern 1pt}'{\kern
1pt}{\kern 1pt}{\kern 1pt}{\kern 1pt}{\kern 1pt}}{\left\langle {H_{k+1} }
\right\rangle _j^- {\kern 1pt}{\kern 1pt}}(II)$
(accordingly$\cfrac{\left\langle {H_k } \right\rangle _j^- {\kern 1pt}{\kern
1pt}{\kern 1pt}{\kern 1pt}}{\left\langle {H_{k+1} } \right\rangle _j^-
{\kern 1pt}{\kern 1pt}}(I){\kern 1pt}$ for $(I))$ and,  $\langle \tau^\ast\rangle _{j}^{+}\left( {\left\langle {H_{k+1} }
\right\rangle _j^+ } \right)$ is constructed by combining $\langle \tau^\ast\rangle _{j}^{+}\left( {\left\langle {H_k } \right\rangle _j^+ } \right)$ and
$\cfrac{\left\langle {H_k } \right\rangle _j^+ {\kern 1pt}{\kern 1pt}{\kern
1pt}{\kern 1pt}}{\left\langle {H_{k+1} } \right\rangle _j^+ {\kern
1pt}}(ID_\Omega ){\kern 1pt}$.

\textbf{Case 3} $S'\in \left\langle {H_k } \right\rangle _j^+ $.
It is dealt with by a procedure dual to Case 2 and omitted.
\end{construction}

\begin{definition}
The manipulation described in Construction 3.1 is called the derivation-splitting operation when it is applied to a derivation and, the splitting operation when applied to a hypersequent.
\end{definition}

\begin{corollary}
Let $\vdash _{{\rm {\bf GL}}_{\rm {\bf \Omega }}^{{\rm {\bf cf}}} } G\vert
\Gamma, p_1, \Delta \Rightarrow p_1 $. Then there exist two hypersequents
$G_1 $ and $G_2 $ such that $G=G_1 \bigcup G_2 $,  $G_1 \bigcap G_2 =\emptyset $,
$\vdash _{{\rm {\bf GL}}_{\rm {\bf \Omega }}^{{\rm {\bf cf}}} } G_1 \vert
\Gamma \Rightarrow t$ and $\vdash _{{\rm {\bf GL}}_{\rm {\bf \Omega }}^{{\rm
{\bf cf}}} } G_2 \vert \Delta \Rightarrow t$.
\end{corollary}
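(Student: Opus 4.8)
The plan is to obtain the statement as the top-level output of the derivation-splitting operation of Construction 3.1, taken with $j=1$. Let $\tau^\ast$ be the given ${\rm {\bf GL}}_{\rm {\bf \Omega }}^{{\rm {\bf cf}}}$-proof of $H\equiv G\vert \Gamma, p_1, \Delta \Rightarrow p_1$. First I would fix the thread $Th_{\tau^\ast}(p_1 \Rightarrow p_1)=(H_0, \cdots, H_n)$ that traces the sequent carrying the occurrence $p_1$ from the initial sequent $p_1 \Rightarrow p_1\equiv H_0$ down to the root $H_n\equiv H$; by the identification discipline of Definition 2.9 this branch, and hence the thread, is uniquely determined, and $H_n\equiv H$ forces $\Gamma_n=\Gamma$ and $\Delta_n=\Delta$.

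Next I would run Construction 3.1 along this thread to produce the hypersequents $\langle H_k\rangle_1^-$, $\langle H_k\rangle_1^+$ and the derivations $\langle \tau^\ast\rangle_1^-$, $\langle \tau^\ast\rangle_1^+$ for all $0\leqslant k\leqslant n$. At the final index the three invariants recorded there become $\Gamma \Rightarrow t\in \langle H_n\rangle_1^-$, $\Delta \Rightarrow t\in \langle H_n\rangle_1^+$, and
\[\langle H_n\rangle_1^+ \backslash \{\Delta \Rightarrow t\}\vert \langle H_n\rangle_1^- \backslash \{\Gamma \Rightarrow t\}=H_n \backslash \{\Gamma, p_1, \Delta \Rightarrow p_1\}=G.\]
Setting $G_1:=\langle H_n\rangle_1^- \backslash \{\Gamma \Rightarrow t\}$ and $G_2:=\langle H_n\rangle_1^+ \backslash \{\Delta \Rightarrow t\}$ then makes $G_1\vert \Gamma \Rightarrow t=\langle H_n\rangle_1^-$ and $G_2\vert \Delta \Rightarrow t=\langle H_n\rangle_1^+$ the end-hypersequents of the two derivations just built, so both $\vdash_{{\rm {\bf GL}}_{\rm {\bf \Omega }}^{{\rm {\bf cf}}}} G_1\vert \Gamma \Rightarrow t$ and $\vdash_{{\rm {\bf GL}}_{\rm {\bf \Omega }}^{{\rm {\bf cf}}}} G_2\vert \Delta \Rightarrow t$ hold. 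The displayed invariant exhibits $G$ as the combination $G_2\vert G_1$, which gives at once $G=G_1\bigcup G_2$; the disjointness $G_1\bigcap G_2=\emptyset$ follows because each inductive step routes the active side-sequent into exactly one part (Case 2 into $\langle\cdot\rangle_1^-$, Case 3 into $\langle\cdot\rangle_1^+$, and Case 1 distributes the newly created sequents according to the focus side), so no sequent occurrence of $G$ can land in both.

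The real content therefore sits inside Construction 3.1, and the only thing that remains is to confirm that its three invariants are preserved along the entire thread; I expect this induction on $n$ (equivalently on the height of $\tau^\ast$) to be the main obstacle. The delicate steps are the two-premise rules $(COM)$, $(\vee_{lw})$ and $(\wedge_{rw})$, where the context $G''$ of the passive premise has to be attached to the correct part while $\langle \tau^\ast\rangle_1^-$ and $\langle \tau^\ast\rangle_1^+$ are kept legal ${\rm {\bf GL}}_{\rm {\bf \Omega }}^{{\rm {\bf cf}}}$-derivations; it is precisely the absence of $(EC_\Omega)$ in ${\rm {\bf GL}}_{\rm {\bf \Omega }}^{{\rm {\bf cf}}}$ that lets the two parts be developed independently without ever having to be merged back together.
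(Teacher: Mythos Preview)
Your proposal is correct and matches the paper's approach: the corollary is stated immediately after Construction 3.1 with no separate proof, so it is meant to be read off exactly as you do, by taking $j=1$, running the construction to index $n$, and setting $G_1:=\langle H_n\rangle_1^-\backslash\{\Gamma\Rightarrow t\}$ and $G_2:=\langle H_n\rangle_1^+\backslash\{\Delta\Rightarrow t\}$. Your added remarks on disjointness and on the inductive maintenance of the invariants simply spell out what the paper leaves implicit.
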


\begin{construction}
Given a proof $\tau ^\ast $ of $H\equiv G\vert \Pi \Rightarrow p_j \vert
\Gamma, p_j, \Delta \Rightarrow A$ in ${\rm {\bf GL}}_{\rm {\bf \Omega
}}^{{\rm {\bf cf}}} $,  let $Th_{\tau ^\ast } (p_j \Rightarrow p_j )=(H_0
, \cdots, H_n )$,  where $H_0 \equiv p_j \Rightarrow p_j $ and $H_n \equiv H$.
Then there exists $1\leqslant m\leqslant n$ such that $H_m $ is in the form
$G'\vert \Pi '\Rightarrow p_j \vert \Gamma ', p_j, \Delta '\Rightarrow A'$
and $H_{m-1} $ is in the form $G''\vert \Gamma '', p_j, \Delta ''\Rightarrow
p_j $. A proof of $G\vert \Gamma, \Pi, \Delta \Rightarrow A$ in ${\rm {\bf
GL}}_{\rm {\bf \Omega }}^{{\rm {\bf cf}}} $ is constructed by induction on
$n-m$ as follows.

\begin{itemize}
\item For the base step,  let $n-m=0$. Then \\
$\cfrac{H_{n-1} \equiv G'\vert \Pi ', \Gamma ', p_j, \Delta ', \Pi '''\Rightarrow p_j {\kern 1pt}\quad G''\vert \Gamma '', \Pi '', \Delta ''\Rightarrow A}{H_n \equiv G'\vert G''\vert \Pi ', \Pi '', \Pi '''\Rightarrow p_j {\kern 1pt}\vert \Gamma '', \Gamma ', p_j, \Delta ', \Delta ''\Rightarrow A}(COM)\in \tau ^\ast $,  where $G'\vert G''=G$ and $\Pi ', \Pi '', \Pi '''=\Pi $ and $\Gamma '', \Gamma '=\Gamma $ and $\Delta ', \Delta ''=\Delta $. It follows from Corollary 3.3 that there exist $G_1 '$ and $G_2 '$ such that $G'=G_1 '\bigcup G_2 '$,  $G_1 '\bigcap G_2 '=\emptyset $,  $\vdash _{{\rm {\bf GL}}_{\rm {\bf \Omega }}^{{\rm {\bf cf}}} } G_1 '\vert \Pi ', \Gamma '\Rightarrow t$ and $\vdash _{{\rm {\bf GL}}_{\rm {\bf \Omega }}^{{\rm {\bf cf}}} } G_2 '\vert \Delta ', \Pi '''\Rightarrow t$. Then $G\vert \Gamma, \Pi, \Delta \Rightarrow A$ is proved as follows.

\footnotesize
$\boxed {\cfrac{\cfrac{\cfrac{\cfrac{G''\vert \Gamma '', \Pi '', \Delta ''\Rightarrow A{\kern 1pt}{\kern 1pt}{\kern 1pt}}{G''\vert \Gamma '', t, \Pi '', \Delta ''\Rightarrow A{\kern 1pt}{\kern 1pt}}(t_l ){\kern 1pt}{\kern 1pt}{\kern 1pt}{\kern 1pt}{\kern 1pt}{\kern 1pt}{\kern 1pt}{\kern 1pt}{\kern 1pt}{\kern 1pt}{\kern 1pt}{\kern 1pt}{\kern 1pt}{\kern 1pt}{\kern 1pt}\cfrac{G_1 '\vert \Pi ', \Gamma '\Rightarrow t}{G_1 '\vert \Gamma ', \Pi '\Rightarrow t}(WCM)}{G''\vert G_1 '\vert \Gamma '', \Gamma ', \Pi ', \Pi '', \Delta ''\Rightarrow A{\kern 1pt}}(WCT)}{G''\vert G_1 '\vert \Gamma '', \Gamma ', \Pi ', \Pi '', t, \Delta ''\Rightarrow A}(t_l ){\kern 1pt}{\kern 1pt}{\kern 1pt}{\kern 1pt}{\kern 1pt}{\kern 1pt}{\kern 1pt}{\kern 1pt}{\kern 1pt}{\kern 1pt}{\kern 1pt}{\kern 1pt}{\kern 1pt}{\kern 1pt}{\kern 1pt}\cfrac{G_2 '\vert \Delta ', \Pi '''\Rightarrow t}{G_2 '\vert \Pi ''', \Delta '\Rightarrow t}(WCM){\kern 1pt}{\kern 1pt}}{G''\vert G_1 '\vert G_2 '\vert \Gamma '', \Gamma ', \Pi ', \Pi '', \Pi ''', \Delta ', \Delta ''\Rightarrow A}(WCT)}$.
\normalsize
\item For the induction step,  let $n-m>0$. Then it is treated using applications of the induction hypothesis to the premise followed by an application of the relevant rule. For example,  let $\cfrac{H_{n-1} =G'\vert \Pi \Rightarrow p_j \vert \Sigma ', \Gamma '', p_j, \Delta '', \Sigma '''\Rightarrow A'{\kern 1pt}{\kern 1pt}{\kern 1pt}{\kern 1pt}{\kern 1pt}{\kern 1pt}{\kern 1pt}{\kern 1pt}{\kern 1pt}{\kern 1pt}{\kern 1pt}{\kern 1pt}{\kern 1pt}{\kern 1pt}{\kern 1pt}{\kern 1pt}{\kern 1pt}{\kern 1pt}{\kern 1pt}{\kern 1pt}{\kern 1pt}{\kern 1pt}{\kern 1pt}G''\vert \Gamma ', \Sigma '', \Delta '\Rightarrow A{\kern 1pt}}{H_n =G'\vert \Pi \Rightarrow p_j \vert \Sigma ', \Sigma '', \Sigma '''\Rightarrow A'{\kern 1pt}\vert G''\vert \Gamma ', \Gamma '', p_j, \Delta '', \Delta '\Rightarrow A{\kern 1pt}}(COM)\in \tau ^\ast $,  where $G'{\kern 1pt}\vert G''\vert \Sigma ', \Sigma '', \Sigma '''\Rightarrow A'=G$ and $\Gamma ', \Gamma ''=\Gamma $ and $\Delta '', \Delta '=\Delta $.
By the induction hypothesis we obtain a derivation of $G\vert \Gamma, \Pi
, \Delta \Rightarrow A$:
\[
\cfrac{G'\vert \Sigma ', \Gamma '', \Pi, \Delta '', \Sigma '''\Rightarrow
A'{\kern 1pt}{\kern 1pt}{\kern 1pt}{\kern 1pt}{\kern 1pt}{\kern 1pt}{\kern
1pt}{\kern 1pt}{\kern 1pt}{\kern 1pt}{\kern 1pt}{\kern 1pt}{\kern 1pt}{\kern
1pt}{\kern 1pt}{\kern 1pt}{\kern 1pt}{\kern 1pt}{\kern 1pt}{\kern 1pt}{\kern
1pt}{\kern 1pt}{\kern 1pt}G''\vert \Gamma ', \Sigma '', \Delta '\Rightarrow
A{\kern 1pt}}{G'\vert \Sigma ', \Sigma '', \Sigma '''\Rightarrow A'{\kern
1pt}\vert G''\vert \Gamma ', \Gamma '', \Pi, \Delta '', \Delta '\Rightarrow
A{\kern 1pt}}(COM).
\]
\end{itemize}
\end{construction}
\begin{definition}
The manipulation described in Construction 3.4 is called the
derivation-splicing operation when it is applied to a derivation and, the splicing operation when applied to a hypersequent.
\end{definition}

\begin{corollary}
If $\vdash _{{\rm {\bf GL}}_{\rm {\bf \Omega }}^{{\rm {\bf cf}}} } G\vert
\Pi \Rightarrow p_j \vert \Gamma, p_j, \Delta \Rightarrow A$,  then $\vdash
_{{\rm {\bf GL}}_{\rm {\bf \Omega }}^{{\rm {\bf cf}}} } G\vert \Gamma, \Pi
, \Delta \Rightarrow A$.
\end{corollary}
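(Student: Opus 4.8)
The plan is to read off the desired proof from the derivation-splicing operation of Construction~3.4, so the real content is organizing that operation around the thread of the eigenvariable. Fix a proof $\tau^\ast$ of $H \equiv G \vert \Pi \Rightarrow p_j \vert \Gamma, p_j, \Delta \Rightarrow A$ in ${\rm {\bf GL}}_{\rm {\bf \Omega }}^{{\rm {\bf cf}}} $. Since $p_j$ is an eigenvariable it is never the focus of a logical rule (Definition~2.9(i)) and $(WCT)$ only cuts on the constant $t$, so both displayed occurrences of $p_j$ carry the same identification number $j$ and, reading upward along $Th_{\tau^\ast}(p_j \Rightarrow p_j) = (H_0, \ldots, H_n)$, descend from the single initial sequent $H_0 \equiv p_j \Rightarrow p_j$. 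Let $m$ be the least index at which the two tracked occurrences first lie in distinct sequents. The only rule that can separate an antecedent and a succedent occurrence of the same $p_j$ into two sequents is $(COM)$ (it swaps the middle blocks of its two premises, and no logical rule may take $p_j$ as a focus), so $H_{m-1}$ has the form $G''\vert \Gamma'', p_j, \Delta'' \Rightarrow p_j$ and $H_m$ the form $G'\vert \Pi' \Rightarrow p_j \vert \Gamma', p_j, \Delta' \Rightarrow A'$. I would then induct on $n - m$.

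For the base case $n - m = 0$, the critical $(COM)$ is the last inference, with premise $H_{n-1} \equiv G'\vert \Pi', \Gamma', p_j, \Delta', \Pi''' \Rightarrow p_j$ and side premise $G''\vert \Gamma'', \Pi'', \Delta'' \Rightarrow A$. Applying Corollary~3.3 to the subderivation of $H_{n-1}$ splits it into proofs of $G_1'\vert \Pi', \Gamma' \Rightarrow t$ and $G_2'\vert \Delta', \Pi''' \Rightarrow t$ with $G' = G_1' \bigcup G_2'$ and $G_1' \bigcap G_2' = \emptyset$; replacing $p_j$ by the constant $t$ is exactly what licenses this split. I would then pad the side premise with $(t_l)$, use $(WCM)$ to reorder $\Pi', \Gamma'$ into $\Gamma', \Pi'$ and $\Delta', \Pi'''$ into $\Pi''', \Delta'$ so that the blocks sit in the order demanded by the target antecedent, and cut twice with $(WCT)$, precisely the boxed derivation of Construction~3.4. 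Here $\Gamma = \Gamma'', \Gamma'$, $\Pi = \Pi', \Pi'', \Pi'''$, $\Delta = \Delta', \Delta''$ and $G = G''\vert G_1' \vert G_2'$, so the reassembled sequent matches $G\vert \Gamma, \Pi, \Delta \Rightarrow A$ on the nose.

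For the induction step $n - m > 0$, the last inference of $\tau^\ast$ acts on some sequent other than the critical pair (or purely on the succedents $A', A$, never on $p_j$ itself), hence it commutes past the splicing: apply the induction hypothesis to the relevant premise to splice $\Pi$ into the antecedent sequent there, then reapply the same rule, as in the $(COM)$ instance worked out in Construction~3.4. Proposition~2.10 guarantees that the identification-number bookkeeping, disjointness and closure conditions of Definition~2.9 are preserved throughout, so every intermediate hypersequent remains a legal object of ${\rm {\bf GL}}_{\rm {\bf \Omega }}^{{\rm {\bf cf}}} $.

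I expect the main obstacle to be the uniformity of the induction step: one must confirm that the splicing commutes with \emph{every} rule of ${\rm {\bf GL}}_{\rm {\bf \Omega }}^{{\rm {\bf cf}}} $ applied below $H_m$, and in particular that the contexts redistributed by $(COM)$ keep the spliced block $\Pi$ attached to the correct antecedent occurrence of $p_j$. The noncommutative setting makes this delicate, since the reassembly has no freedom beyond what $(WCM)$ supplies; the reason it nonetheless succeeds is that Corollary~3.3 always separates the premise into a ``before-$p_j$'' block and an ``after-$p_j$'' block, each ending in $t$, which is exactly the shape that $(WCM)$ together with $(WCT)$ can reorganize into the target order.
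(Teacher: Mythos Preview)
Your proposal is correct and follows essentially the same approach as the paper: Corollary~3.6 is stated there as an immediate consequence of Construction~3.4, and what you have written is a faithful unpacking of that construction --- the same thread $Th_{\tau^\ast}(p_j\Rightarrow p_j)$, the same split at the least $m$ where the two $p_j$'s separate (necessarily by $(COM)$), the same base case via Corollary~3.3 followed by $(WCM)$ and $(WCT)$, and the same commuting induction step. Your phrasing of the induction step (``acts on some sequent other than the critical pair'') is slightly loose, since the last rule may well act on one of the two critical sequents with focus away from $p_j$, but you already note this in the parenthetical and it does not affect the argument.
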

\begin{definition}
(i) Let $\vdash _{{\rm {\bf GL}}_{\rm {\bf \Omega }}^{{\rm {\bf cf}}} }
H\equiv G\vert \Gamma, p_j, \Delta \Rightarrow p_j $. Define $\left\langle H
\right\rangle _j^- =G_1 \vert \Gamma \Rightarrow t$, $_{ }\left\langle H
\right\rangle _j^+ =G_2 \vert \Delta \Rightarrow t$ and $D_j (H)=\{G_1 \vert
\Gamma \Rightarrow t,  G_2 \vert \Delta \Rightarrow t\}$,  where,  $G_1 $ and $G_2 $ are determined by Corollary 3.3.

(ii) Let $\vdash _{{\rm {\bf GL}}_{\rm {\bf \Omega }}^{{\rm {\bf cf}}} }
H\equiv G\vert \Pi \Rightarrow p_j \vert \Gamma, p_j, \Delta \Rightarrow A$.
Define$D_j (H)=\{G\vert \Gamma, \Pi, \Delta \Rightarrow A\}=\left\langle H
\right\rangle _j $.

(iii) Let $\vdash _{{\rm {\bf GL}}_{\rm {\bf \Omega }}^{{\rm {\bf cf}}} }
G$. $D_j (G)=\{G\}$ if $p_j $ does not occur in $G$.

(iv) Let $\vdash _{{\rm {\bf GL}}_{\rm {\bf \Omega }}^{{\rm {\bf cf}}} } G_i
$ for all $1\leqslant i\leqslant n$. Define $D_j (\{G_1, \cdots, G_n \})=D_j
(G_1 )\bigcup \cdots \bigcup D_j (G_n )$.

(v) Let $\vdash _{{\rm {\bf GL}}_{\rm {\bf \Omega }}^{{\rm {\bf cf}}} } G$
and $K=\{1, \cdots,  n \}\subseteq v(G)$. Define $D_K (G)=D_n (\cdots D_2
(D_1 (G))\cdots )$. Especially,  define $\mathcal{D}(G)=D_{v_l (G)} (G)$.
\end{definition}
\begin{theorem}
Let $\vdash _{{\rm {\bf GL}}_{\rm {\bf \Omega }}^{{\rm {\bf cf}}} } G$. Then
$\vdash _{{\rm {\bf GL}}_{\rm {\bf \Omega }}^{{\rm {\bf cf}}} } H$  for all
$H\in \mathcal{D}(G)$.
\end{theorem}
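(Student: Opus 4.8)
The plan is to argue by induction on $|v_l(G)|$, using a single application of the operator $D_j$ as the engine (handled by Corollaries 3.3 and 3.6) and Proposition 2.10 to keep the construction well defined throughout. First I would isolate the one-step claim: if $\vdash_{{\rm {\bf GL}}_{\rm {\bf \Omega }}^{{\rm {\bf cf}}}} H$ and $j\in v_l(H)$, then every member of $D_j(H)$ is provable. The starting remark is that every provable hypersequent of ${\rm {\bf GL}}_{\rm {\bf \Omega }}^{{\rm {\bf cf}}}$ is closed: each initial sequent is closed, each rule application requires closed premises by Definition 2.9(viii), and by Proposition 2.10 closed premises force a closed conclusion, so closedness propagates up any derivation. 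Hence $v_l(H)=v_r(H)$, and by the disjointness and distinctness clauses of Definition 2.9(iii) the eigenvariable $p_j$ occurs exactly once as an antecedent occurrence and exactly once as a succedent. If both occurrences sit in one sequent, $H$ has the shape $G\vert\Gamma, p_j, \Delta\Rightarrow p_j$ of Definition 3.7(i) and Corollary 3.3 supplies proofs of $G_1\vert\Gamma\Rightarrow t$ and $G_2\vert\Delta\Rightarrow t$, i.e. of both elements of $D_j(H)$; if they sit in distinct sequents, $H$ has the shape $G\vert\Pi\Rightarrow p_j\vert\Gamma, p_j, \Delta\Rightarrow A$ of Definition 3.7(ii) and Corollary 3.6 supplies the single element $G\vert\Gamma, \Pi, \Delta\Rightarrow A$ of $D_j(H)$. (When $p_j$ does not occur, $D_j(H)=\{H\}$ by Definition 3.7(iii) and there is nothing to prove.)

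Next I would verify that $D_j$ deletes only the identification number $j$ and keeps every other eigenvariable pair inside a single component, so that the operator may be iterated. In the splicing case this is transparent, since $G\vert\Gamma, \Pi, \Delta\Rightarrow A$ retains every sequent and every surviving eigenvariable occurrence of $H$. In the splitting case the two outputs $G_1\vert\Gamma\Rightarrow t$ and $G_2\vert\Delta\Rightarrow t$ are themselves provable, hence closed, so $v_l=v_r$ holds for each separately; consequently no surviving eigenvariable can have its antecedent occurrence routed into $G_1$ and its succedent occurrence into $G_2$ (equivalently, the split respects the minimal closed units of Definition 2.11). Thus each $H'\in D_j(H)$ is again provable and closed with $v_l(H')=v_l(H)\setminus\{j\}$, and the one-step claim reapplies verbatim to any remaining index.

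Finally I would run the induction along the recursion $D_K(G)=D_n(\cdots D_2(D_1(G))\cdots)$ of Definition 3.7(v), taking $K=v_l(G)$. For $|v_l(G)|=0$ we have $\mathcal{D}(G)=\{G\}$, provable by hypothesis. For the step, put $K=\{1, \ldots, n\}$ and $K'=\{1, \ldots, n-1\}$; the induction hypothesis makes every $H'\in D_{K'}(G)$ provable, hence closed, and by the previous paragraph $v_l(H')=v_l(G)\setminus K'=\{n\}$, so $n\in v_l(H')$. Applying the one-step claim to $D_n$ at each $H'$ shows that every element of $D_n(H')$ is provable, and since $\mathcal{D}(G)=D_K(G)=\bigcup_{H'\in D_{K'}(G)}D_n(H')$ by Definition 3.7(iv), every $H\in\mathcal{D}(G)$ is provable.

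The delicate point, and the one I would spend the most care on, is the second paragraph: that a splitting never separates a surviving eigenvariable pair across its two outputs. This is exactly where the closedness of provable hypersequents is indispensable, because without it the composite operator $\mathcal{D}$ need not even be well defined on the intermediate hypersequents produced by $D_1, \ldots, D_{j-1}$, since a later $D_{j'}$ could meet an index occurring on only one side of a sequent. Granting that, the remainder is a routine induction feeding on Corollaries 3.3 and 3.6.
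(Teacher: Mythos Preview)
Your proof is correct and follows exactly the route the paper takes in its one-line argument (``Immediately from Corollary 3.3, Corollary 3.6 and Definition 3.7''), merely unpacking the iteration of $D_j$ and the induction explicitly. One minor imprecision worth fixing: after a splitting step the two outputs \emph{partition} the surviving indices, so in general $v_l(H')\subseteq v_l(H)\setminus\{j\}$ rather than equality, and hence your ``$v_l(H')=\{n\}$'' in the induction step need not hold; this does not affect validity, since your own parenthetical already covers absent indices via Definition~3.7(iii).
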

\begin{proof}Immediately from Corollary 3.3,  Corollary 3.6 and Definition
3.7.
\end{proof}

\begin{lemma}
Let $G'$ be a minimal closed unit of $G\vert G'$. Then $G'$ has the form
$\Gamma \Rightarrow A\vert \Gamma _{i_2 } \Rightarrow p_{i_2 } \vert \cdots
\vert \Gamma _{i_n } \Rightarrow p_{i_n } $ if there exists one sequent
$\Gamma \Rightarrow A\in G'$ such that $A$ is not an eigenvariable otherwise
$G'$ has the form $\Gamma _{i_1 } \Rightarrow p_{i_1 } \vert \cdots \vert
\Gamma _{i_n } \Rightarrow p_{i_n } $.
\end{lemma}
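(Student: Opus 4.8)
The plan is to recast the statement as a counting argument on a directed multigraph canonically attached to the closed hypersequent $G\vert G'$. Since $G\vert G'$ is closed, Definition 2.9(iv) gives $v_l(G\vert G')=v_r(G\vert G')$; write $I$ for this common set of identification numbers. By the disjointness clauses of Definition 2.9(iii), each $i\in I$ occurs in the antecedent of exactly one sequent of $G\vert G'$ and occurs as the succedent of exactly one sequent of $G\vert G'$. I would therefore form the multigraph whose vertices are the sequents of $G\vert G'$ and which has, for each $i\in I$, a single edge directed from the sequent whose succedent is $p_i$ to the sequent in whose antecedent $p_i$ occurs. With this bookkeeping the out-degree of a sequent $S$ is $|v_r(S)|\in\{0,1\}$, equal to $1$ exactly when the succedent of $S$ is an eigenvariable, while its in-degree is $|v_l(S)|$.

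The first key step is to identify the closed sub-hypersequents of $G\vert G'$ with the unions of weakly connected components of this graph. For any $H\subseteq G\vert G'$ and any $i\in I$ one has $i\in v_l(H)$ iff the target of the $i$-edge lies in $H$, and $i\in v_r(H)$ iff its source lies in $H$; hence $v_l(H)=v_r(H)$ holds precisely when, for every edge, both endpoints lie in $H$ or neither does, which is exactly the condition that $H$ be saturated under the edge relation, i.e.\ a union of weakly connected components. Since $[S]_{G\vert G'}$ is by Definition 2.11 the intersection of all closed $H$ containing $S$, and since the single component $C$ through $S$ is itself closed and is contained in every such $H$, the intersection collapses to $C$. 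Thus the minimal closed unit $G'=[S]_{G\vert G'}$ is exactly one weakly connected component, and in particular $G'$ is weakly connected.

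With connectedness secured the conclusion follows by a degree count. Let $n$ be the number of sequents of $G'$ and $E$ the number of its edges. Then $E$ is the sum of the out-degrees over the vertices of $G'$, which equals the number $r$ of sequents of $G'$ whose succedent is an eigenvariable (each contributes out-degree $1$, the rest contribute $0$). A connected multigraph on $n$ vertices has at least $n-1$ edges, so $r=E\geqslant n-1$, whence the number $n-r$ of sequents of $G'$ with a non-eigenvariable succedent is at most $1$. If $n-r=0$ then every sequent of $G'$ has an eigenvariable succedent and $G'$ has the second displayed form $\Gamma_{i_1}\Rightarrow p_{i_1}\vert\cdots\vert\Gamma_{i_n}\Rightarrow p_{i_n}$; if $n-r=1$ there is exactly one sequent $\Gamma\Rightarrow A$ with $A$ not an eigenvariable, giving the first displayed form.

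The step I expect to demand the most care is the equivalence in the second paragraph: one must verify that closedness of $H$ is genuinely the same as being saturated under the edge relation, and that the intersection defining $[S]_{G\vert G'}$ really collapses to the component through $S$ rather than to some strictly smaller set. Once that is settled the remaining count is elementary, provided multi-edges (which arise when two distinct eigenvariables are matched between the same ordered pair of sequents) are counted with multiplicity and the degenerate case of a single isolated vertex, where $n=1$ and $E=0$, is checked directly against the bound $E\geqslant n-1$.
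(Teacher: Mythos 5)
Your proof is correct, but it takes a genuinely different route from the paper's. The paper argues by an explicit saturation construction: it sets $G_1=\Gamma\Rightarrow A$ and, as long as $v_l(G_k)\setminus v_r(G_k)\neq\emptyset$, adjoins the unique sequent $\Gamma_{i_{k+1}}\Rightarrow p_{i_{k+1}}$ of $G\vert G'$ supplying a missing succedent (it exists because $v_l(G\vert G')=v_r(G\vert G')$), maintaining the invariant $v_r(G_k)\subseteq v_l(G_k)$ until the process closes off; since only sequents of the form $\Gamma_i\Rightarrow p_i$ are ever added and the terminal hypersequent is closed, minimality identifies it with $G'$. You instead characterize the closed sub-hypersequents of $G\vert G'$ as the edge-saturated subsets of a matching multigraph, i.e.\ unions of weakly connected components, so that the minimal closed unit is exactly one component, and then finish with the count $r=E\geqslant n-1$. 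Your route is cleaner and more symmetric: it treats the two cases of the lemma uniformly, it makes explicit why minimal closed units partition $G\vert G'$ (the paper's appeal to minimality tacitly needs $[S]_{G\vert G'}=[\Gamma\Rightarrow A]_{G\vert G'}$ for every $S\in G'$, which is immediate from your component description), and it anticipates the $\Omega$-graph of Definition 3.12. What the paper's construction buys, and yours does not directly provide, is the explicit enumeration $i_2,\dots,i_n$ with $i_k\in v_l(G_{k-1})$, which the proof of Lemma 3.11 reuses to order the splicing operations; that is a presentational convenience rather than a gap in your argument. The two points of care you flag yourself do go through: every identification number of the closed hypersequent $G\vert G'$ occurs in exactly one antecedent and as exactly one succedent by Definition 2.9(iii), and loops (an identification number occurring both in the antecedent and as the succedent of a single sequent, as in a splitting sequent $\Gamma,p_k,\Delta\Rightarrow p_k$) do not invalidate the bound $E\geqslant n-1$, since connectivity already forces $n-1$ non-loop edges.
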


\begin{proof} Define $G_1 =\Gamma \Rightarrow A$ in Construction 5.2 in
[6]. Then $\emptyset =v_r (G_1 )\subseteq v_l (G_1 )$.  Suppose that $G_k
$ is constructed such that $v_r (G_k )\subseteq v_l (G_k )$. If $v_l (G_k
)=v_r (G_k )$,  the procedure terminates and $n: =k$,  otherwise $v_l (G_k
)\backslash v_r (G_k )\ne \emptyset $ and define $i_{k+1} $ to be an
identification number in $v_l (G_k )\backslash v_r (G_k )$. Then there
exists $\Gamma _{i_{k+1} } \Rightarrow p_{i_{k+1} } \in G\backslash G_k $ by
$v_l (G)=v_r (G)$ and,  define
$G_{k+1} =G_k \vert \Gamma _{i_{k+1} } \Rightarrow p_{i_{k+1} }. $ Thus $v_r
(G_{k+1} )=v_r (G_k )\bigcup \{i_{k+1} \}\subseteq v_l (G_k )\subseteq v_l
(G_{k+1} )$. Hence there exists a sequence $i_2, \cdots,  i_n$ of
identification numbers$_{ }$such that $v_r (G_k )\subseteq v_l (G_k )$ for
all $1\leqslant k\leqslant n$\textbf{,  }where $G_1 =\Gamma
\Rightarrow A$,  $G_k =\Gamma \Rightarrow A\vert \Gamma _{i_2 } \Rightarrow
p_{i_2 } \vert \cdots \vert \Gamma _{i_k } \Rightarrow p_{i_k } $ for all
$2\leqslant k\leqslant n$. Therefore $G'$ has the form $\Gamma \Rightarrow
A\vert \Gamma _{i_2 } \Rightarrow p_{i_2 } \vert \cdots \vert \Gamma _{i_n }
\Rightarrow p_{i_n } $.
\end{proof}

\begin{definition}
Let $G'$ be a minimal closed unit of $G\vert G'$. $G'$ is a splicing unit if
it has the form $\Gamma \Rightarrow A\vert \Gamma _{i_2 } \Rightarrow p_{i_2
} \vert \cdots \vert \Gamma _{i_n } \Rightarrow p_{i_n } $. $G'$ is a
splitting unit if it has the form $\Gamma _{i_1 } \Rightarrow p_{i_1 } \vert
\cdots \vert \Gamma _{i_n } \Rightarrow p_{i_n } $.
\end{definition}
\begin{lemma}
Let $G'$ be a splicing unit of $G\vert G'$ in the form $\Gamma \Rightarrow
A\vert \Gamma _{i_2 } \Rightarrow p_{i_2 } \vert \cdots \vert \Gamma _{i_n }
\Rightarrow p_{i_n } $ and $K=\{i_2, \cdots, i_n\}$.  Then $\left| {D_K
(G\vert G')} \right|=1$.
\end{lemma}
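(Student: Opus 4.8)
The plan is to prove the statement by induction on $n$, the number of sequents in the splicing unit $G'$, by showing that every eigenvariable elimination falls under case (ii) of Definition 3.7 (the splicing case, which outputs a single hypersequent) and never under case (i) (the splitting case, which outputs two). The structural fact I would extract from the construction in the proof of Lemma 3.9 is that the sequents of $G'$ are listed so that, for each $k$, the index $i_k$ was chosen in $v_l(G_{k-1})\setminus v_r(G_{k-1})$ where $G_{k-1}\equiv \Gamma\Rightarrow A\vert \Gamma_{i_2}\Rightarrow p_{i_2}\vert\cdots\vert\Gamma_{i_{k-1}}\Rightarrow p_{i_{k-1}}$. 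Hence the antecedent occurrence of $p_{i_k}$ lies in this initial segment, while its succedent occurrence is the one displayed in the fresh sequent $\Gamma_{i_k}\Rightarrow p_{i_k}$; in particular the two occurrences of each eigenvariable always sit in \emph{distinct} sequents of $G'$.

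For the base case $n=1$ we have $K=\emptyset$, so $D_K(G\vert G')=\{G\vert G'\}$ and $|D_K(G\vert G')|=1$. For the induction step I would follow the order prescribed by Definition 3.7(v), applying $D_{i_2}$ innermost (first). Since $i_2\in v_l(\Gamma\Rightarrow A)$, the antecedent occurrence of $p_{i_2}$ lies in the $A$-sequent and its succedent occurrence lies in $\Gamma_{i_2}\Rightarrow p_{i_2}$; these being different sequents, $D_{i_2}$ is an instance of case (ii) of Definition 3.7, and by Corollary 3.6 it returns a single hypersequent in which $\Gamma_{i_2}$ has been spliced into the antecedent of the $A$-sequent. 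Note also that $D_{i_2}$ touches only $G'$, since $G'$ is closed with $v_l(G')=v_r(G')=K$ and is disjoint from $G$, so the result has the form $G\vert G''$.

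It then remains to observe that $G''$ is again a splicing unit with one fewer sequent and eigenvariable index set $\{i_3,\ldots,i_n\}$: the succedent $A$ is untouched and still not an eigenvariable; the sequents $\Gamma_{i_3}\Rightarrow p_{i_3},\ldots,\Gamma_{i_n}\Rightarrow p_{i_n}$ persist unchanged; and every remaining $p_{i_k}$ still has its succedent occurrence in its own sequent and its antecedent occurrence now inside the enlarged $A$-sequent. Applying the induction hypothesis gives $|D_{\{i_3,\ldots,i_n\}}(G\vert G'')|=1$, and since Definition 3.7(v) yields $D_K(G\vert G')=D_{\{i_3,\ldots,i_n\}}(D_{i_2}(G\vert G'))$ with $D_{i_2}(G\vert G')$ a single hypersequent, we conclude $|D_K(G\vert G')|=1$. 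Intuitively the whole process just splices $\Gamma_{i_2},\ldots,\Gamma_{i_n}$ one by one into the $A$-sequent.

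The main obstacle is to maintain, throughout the eliminations, the invariant that no single sequent ever contains both the antecedent and the succedent occurrence of one eigenvariable — precisely the configuration of case (i) that would double the output. This is what the ordering from Lemma 3.9 secures: each splicing only relocates antecedent material into the $A$-sequent, whose succedent remains the non-eigenvariable $A$, so the succedent occurrences of $p_{i_3},\ldots,p_{i_n}$ stay confined to their own sequents and are never merged with their antecedent partners. One should also check that splicing $p_{i_2}$ relocates, rather than deletes, the antecedent occurrences of the later eigenvariables, so that they remain available in the $A$-sequent for the subsequent case-(ii) eliminations; this is immediate from the form of case (ii) in Definition 3.7.
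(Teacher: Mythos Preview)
Your proof is correct and follows essentially the same approach as the paper's: both invoke the ordering from Lemma~3.9 to ensure that $p_{i_2}$ has its antecedent occurrence in the $A$-sequent, apply $D_{i_2}$ as a splicing (case~(ii)) to obtain the single hypersequent $G\vert\Gamma[\Gamma_{i_2}]\Rightarrow A\vert\Gamma_{i_3}\Rightarrow p_{i_3}\vert\cdots\vert\Gamma_{i_n}\Rightarrow p_{i_n}$, and then iterate. One small overstatement: after a single application of $D_{i_2}$ it is not true that \emph{every} remaining $p_{i_k}$ already has its antecedent occurrence in the enlarged $A$-sequent (for $k\geq 4$ it may still sit inside some $\Gamma_{i_j}$ with $3\leq j<k$); what does hold, and what suffices for the induction, is that the truncated list $i_3,\ldots,i_n$ again satisfies the Lemma~3.9 condition $i_k\in v_l(G''_{k-1})$, so the next step is again a splicing.
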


\begin{proof} By the construction in the proof of Lemma 3.9,  $i_k \in v_l
(G_{k-1} )$ for all $2\leqslant k\leqslant n$. Then $p_{i_2 } \in \Gamma $
and $D_{i_2 } (G\vert G')=G\vert \Gamma [\Gamma _{i_2 } ]\Rightarrow A\vert
\Gamma _{i_3 } \Rightarrow p_{i_3 } \vert \cdots \vert \Gamma _{i_n }
\Rightarrow p_{i_n } $,  where $\Gamma [\Gamma _{i_2 } ]_{ }$is obtained by
replacing $p_{i_2 } $ in $\Gamma $ with $\Gamma _{i_2 } $. Then $p_{i_3 }
\in \Gamma [\Gamma _{i_2 } ]$. Repeatedly,  we get $D_{i_2 \cdots i_n }
(G\vert G')=D_K (G\vert G')=G\vert \Gamma [\Gamma _{i_2 } ]\cdots [\Gamma
_{i_n } ]\Rightarrow A$.
\end{proof}

This shows that $D_K (G\vert G')$ is constructed by repeatedly applying
splicing operations.

\begin{definition}
Let $G'$ be a minimal closed unit of $G\vert G'$.  Define $V_{G'} =v(G')$,\\
  $E_{G'}
=\{(i, j)\vert \Gamma, p_i, \Delta \Rightarrow p_j \in G'\}$ and,  $j$ is
called the child node of $i$ for all $(i, j)\in E_{G'} $. We call $\Omega
_{G'} =(V_{G'}, E_{G'} )$ the $\Omega $-graph of $G'$.
\end{definition}

Let $G'$ be a splitting unit of $G\vert G'$ in the form $\Gamma _1
\Rightarrow p_1 \vert \cdots \vert \Gamma _n \Rightarrow p_n $. Then each
node of $\Omega _{G'} $ has one and only one child node. Thus there exists
one cycle in $\Omega _{G'} $ by $\left| {V_{G'} } \right|=n<\infty $. Assume
that,  without loss of generality,  $(1, 2), (2, 3), \cdots, (i, 1)$ is the cycle
of $\Omega _{G'} $. Then $p_1 \in \Gamma _2 $, $p_2 \in \Gamma _3 $, $\cdots
, p_{i-1} \in \Gamma _i $ and $p_i \in \Gamma _1 $. Thus $D_{i\cdots 2}
(G\vert G')=G\vert \Gamma _1 [\Gamma _i ][\Gamma _{i-1} ]\cdots [\Gamma _2
]\Rightarrow p_1 $ is in the form $G\vert \Gamma ', p_1, \Delta '\Rightarrow
p_1 $. By a suitable permutation $\sigma $ of $i+1, \cdots, n$,  we get
$D_{i\cdots 2\sigma (i+1\cdots n)} (G\vert G')=G\vert \Gamma _1 [\Gamma _i
][\Gamma _{i-1} ]\cdots [\Gamma _2 ][\Gamma _{\sigma (i+1)} ]\cdots [\Gamma
_{\sigma (n)} ]\Rightarrow p_1 =G\vert \Gamma, p_1, \Delta \Rightarrow p_1
$. This process also shows that there exists only one cycle in $\Omega _{G'}
$. Then we introduce the following definition.

\begin{definition}
(i) $\Gamma _j \Rightarrow p_j $ is called a splitting sequent of $G'$
and $p_j $ its corresponding splitting variable for all $1\leqslant
j\leqslant i$.

(ii) Let $K=\{1, 2, \cdots, n\}$ and $D_1 (G\vert \Gamma,  p_1,  \Delta
\Rightarrow p_1)=\{G_1 \vert \Gamma \Rightarrow t,  G_2 \vert \Delta
\Rightarrow t\}$.  Define
 $\left\langle {G\vert G'} \right\rangle _K^- =G_1 \vert \Gamma
\Rightarrow t$,  $\left\langle {G\vert G'} \right\rangle _K^+ =G_2 \vert
\Delta \Rightarrow t$ and $D_K (G\vert G')=\{\left\langle {G\vert G'}
\right\rangle _K^+, \left\langle {G\vert G'} \right\rangle _K^- \}$.
\end{definition}

\begin{lemma}
If $G'$ be a splitting unit of $G\vert G'$,  $K=v(G')$ and $k$ be a splitting
variable of $G'$. Then $D_{K\backslash \{k\}} (G\vert G')$ is constructed by
repeatedly applying splicing operations and only the last operation $D_k $
is a splitting operation.
\end{lemma}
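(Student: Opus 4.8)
The plan is to read the two forms of the operation $D_j$ off the $\Omega$-graph $\Omega_{G'}=(V_{G'},E_{G'})$ of Definition 3.12 and then to exhibit an order of elimination of the variables in $K\setminus\{k\}$ in which every single step is a splicing. By the paragraph following Definition 3.12 a splitting unit $G'$ has the form $\Gamma_1\Rightarrow p_1\vert\cdots\vert\Gamma_n\Rightarrow p_n$, every node of $\Omega_{G'}$ has exactly one child, and $\Omega_{G'}$ contains a unique cycle, which after relabelling is $1\to2\to\cdots\to i\to1$; by Definition 3.13 the splitting variables are exactly $p_1,\dots,p_i$, so $k\in\{1,\dots,i\}$, and by the cyclic symmetry of the cycle I may relabel so that $k=1$. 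Writing $c(j)$ for the unique child of $j$, so that $p_j\in\Gamma_{c(j)}$, the only sequent with succedent $p_j$ is $\Gamma_j\Rightarrow p_j$ and the only antecedent containing $p_j$ is $\Gamma_{c(j)}$; hence by Corollary 3.6 and Definition 3.7(ii) the operation $D_j$ is a splicing exactly when $j\neq c(j)$, and by Corollary 3.3 and Definition 3.7(i) it is a splitting exactly when $j=c(j)$, i.e. exactly when the node $j$ carries a self-loop.

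First I would record what a splicing $D_j$ does to the graph. As in the proof of Lemma 3.11 and the computation following Definition 3.12, a splicing $D_j$ replaces $p_j$ inside $\Gamma_{c(j)}$ by $\Gamma_j$ and deletes the sequent $\Gamma_j\Rightarrow p_j$; in graph language this contracts the edge $j\to c(j)$, merging $j$ into $c(j)$, redirecting to $c(j)$ every edge that previously ended at $j$, and leaving all remaining child pointers unchanged. The result is again a functional graph, and by Corollary 3.6 it is a single hypersequent. The crucial observation is that this contraction produces a self-loop at $c(j)$ exactly when $c(c(j))=j$ at the moment it is performed, i.e. exactly when $j\to c(j)\to j$ is a current $2$-cycle. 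Since edge contraction in a functional graph creates no new cycles, every $2$-cycle that ever arises lies inside the original cycle $1\to\cdots\to i\to1$, so self-loops can be created only among the nodes $1,\dots,i$.

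Next I would fix the order of elimination for $K\setminus\{1\}$. I first eliminate the non-cycle nodes $i+1,\dots,n$ from the leaves inward. No such node lies on any cycle, so at the moment each is eliminated it has no self-loop; every one of these steps is therefore a splicing that creates no self-loop, and they leave the cycle $1\to2\to\cdots\to i\to1$ intact while merely enlarging the antecedents $\Gamma_1,\dots,\Gamma_i$. I then eliminate the cycle nodes in the order $D_i,D_{i-1},\dots,D_2$. When $D_j$ is applied, node $j$ still has child $c(j)\neq j$, so $D_j$ is a splicing; the successive contractions shorten the cycle step by step until, when $D_2$ contracts the last surviving $2$-cycle $1\to2\to1$, the edge $1\to2$ is redirected into $1\to1$ and a self-loop is produced at $1=k$. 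Thus all $n-1$ operations indexed by $K\setminus\{k\}$ are splicings, their composite $D_{K\setminus\{k\}}(G\vert G')$ is the single hypersequent $G\vert\Gamma',p_k,\Delta'\Rightarrow p_k$, and the one remaining operation $D_k$, acting on this self-loop, is by Definition 3.7(i) a splitting. This is precisely the assertion of the lemma.

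The hard part is to guarantee that no intermediate operation is \emph{forced} to be a splitting, that is, that no node $j\in K\setminus\{k\}$ carries a self-loop at the instant $D_j$ is applied. The self-loop criterion settles this: a self-loop at a node $v$ can only be produced by contracting an edge $j\to v$ for which $v\to j$ also holds, i.e. by collapsing a current $2$-cycle $j\to v\to j$; the eliminated node $j$ is then still self-loop-free, so $D_j$ is a splicing, and the self-loop is deposited on the surviving endpoint $v$. By the uniqueness of the cycle in $\Omega_{G'}$ (established in the paragraph following Definition 3.12), the only $2$-cycles that ever arise lie inside the shrinking cycle $1\to\cdots\to i\to1$, and the backward sweep $D_i,\dots,D_2$ arranges that the surviving endpoint of the final $2$-cycle is always $k$. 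Hence no node of $K\setminus\{k\}$ ever acquires a self-loop before it is eliminated, every such $D_j$ is a splicing, and the unique splitting is correctly deferred to $D_k$.
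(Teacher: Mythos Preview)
Your proof is correct and follows the same underlying idea as the paper: the paper does not give a separate proof of this lemma, but the argument is contained in the paragraph following Definition~3.12, where the unique cycle $1\to2\to\cdots\to i\to1$ of $\Omega_{G'}$ is identified and the variables are spliced out one by one until only the self-loop at $k=1$ remains.

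The only real difference is the order of elimination. The paper first splices the cycle nodes in the order $D_i,D_{i-1},\dots,D_2$, obtaining $G\vert\Gamma',p_1,\Delta'\Rightarrow p_1$ together with the leftover non-cycle sequents, and then splices the non-cycle variables via ``a suitable permutation $\sigma$ of $i+1,\dots,n$''. You instead clear the non-cycle (tree) nodes from the leaves inward first and only then sweep through the cycle $D_i,\dots,D_2$. Your order has the mild advantage that the self-loop at $k$ is created only at the very last splicing step, so you never have to argue that splicing a non-cycle variable into a sequent whose succedent is already $p_k$ is still a splicing (which the paper's order implicitly requires). Your explicit edge-contraction/self-loop criterion also makes precise what the paper records only as the substitution computation $\Gamma_1[\Gamma_i]\cdots[\Gamma_2]$, so your write-up is somewhat more detailed than the paper's, but the mathematical content is the same.
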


\begin{construction}{(\bf The constrained external contraction rule)}\\
Let $H\equiv G'\vert \left\{ {\left[ S \right]_H } \right\}_1 \vert \left\{
{\left[ S \right]_H } \right\}_2 $,  $\left\{ {\left[ S \right]_H }
\right\}_1 $ and $\left\{ {\left[ S \right]_H } \right\}_2 $ be two copies of
a minimal closed unit $\left[ S \right]_H $,  where we put two copies  into $\{\}_1 $ and $\{\}_2 $ in order to distinguish them.
For any splitting unit $\left[ {S'} \right]_H \subseteq G'$,   $\left\{ {\left[ S \right]_H }
\right\}_1 \vert \left\{ {\left[ S \right]_H } \right\}_2 \subseteq
\left\langle H \right\rangle _K^- $ or $\left\{ {\left[ S \right]_H }
\right\}_1 \vert \left\{ {\left[ S \right]_H } \right\}_2 \subseteq
\left\langle H \right\rangle _K^+ $, where $K=v(\left[ {S'} \right]_H )$. Then 
$G''\vert \left\{ {\left[ S \right]_H } \right\}_1 $ is constructed
by cutting off $\left\{ {\left[ S \right]_H } \right\}_2 $  and some sequents in $G'$  as follows.

(i)  If  $\left\{ {\left[ S \right]_H } \right\}_1 $ and $\left\{ {\left[ S
\right]_H } \right\}_2 $ are two splicing units, then $G'':=G'$;

(ii) If $\left\{ {\left[ S \right]_H } \right\}_1 $ and $\left\{ {\left[ S
\right]_H } \right\}_2 $ are two splitting units and,  $k$,  $k'$ their
splitting variables,  respectively,  $K=v(\left\{ {\left[ S \right]_H
} \right\}_1 )$,  $K'=v(\left\{ {\left[ S \right]_H } \right\}_2 )$,  $D_{K\backslash \{k\}} (\left\{{\left[ S \right]_H } \right\}_1 )=
\Gamma, p_k, \Delta \Rightarrow p_k $,\\
$D_{K'\backslash \{k'\}} (\left\{ {\left[ S \right]_H } \right\}_2 )=\Gamma
, p_{k'}, \Delta \Rightarrow p_{k'} $,
$D_{K\bigcup K'} (H)=\{G_1 '\vert \Gamma \Rightarrow t\vert \Gamma \Rightarrow t, G_2 '\vert \Delta \Rightarrow t, G_2 ''\vert \Delta \Rightarrow t\}$  or
$D_{K\bigcup K'} (H)=\{G_1 '\vert \Delta \Rightarrow t\vert \Delta \Rightarrow
t, G_2 '\vert \Gamma \Rightarrow t, G_2 ''\vert \Gamma \Rightarrow t\}$,  where
$G_1 '\bigcup G_2 '\bigcup G_2 ''=G'$ and $G_2''$ is a copy of $G_2'$.
Then  $G'':=G'\backslash{G_2''}$.

The above operation is called the constrained external contraction rule,
denoted by $\langle EC_\Omega^\ast \rangle$ and written as $\cfrac{G'\vert \left\{ {\left[ S
\right]_H } \right\}_1 \vert \left\{ {\left[ S \right]_H } \right\}_2
}{G''\vert \left\{ {\left[ S \right]_H } \right\}_1 }\langle EC_\Omega^\ast\rangle$.
\end{construction}

\begin{lemma}
If $\vdash _{{\rm {\bf GL}}_{\rm {\bf \Omega }}^{{\rm {\bf cf}}} } H$ as above. Then $\vdash _{{\rm {\bf GpsUL}}_{\rm {\bf \Omega }}} H'$  for all
$H'\in \mathcal{D}(G''\vert \left\{ {\left[ S \right]_H } \right\}_1 )$.
\end{lemma}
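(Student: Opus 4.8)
The plan is to read this statement as the assertion that the constrained external contraction rule $\langle EC_\Omega^\ast\rangle$ of Construction 3.15 is admissible once density has been eliminated: from a proof of $H\equiv G'\vert\{[S]_H\}_1\vert\{[S]_H\}_2$ in the contraction-free calculus ${\rm {\bf GL}}_{\rm {\bf \Omega }}^{{\rm {\bf cf}}}$ I intend to recover every member of $\mathcal{D}(G''\vert\{[S]_H\}_1)$ as the conclusion of a single external contraction whose legitimacy is guaranteed exactly by the constraint built into Construction 3.15. The first step is to apply Theorem 3.8 to the hypothesis, obtaining $\vdash_{{\rm {\bf GL}}_{\rm {\bf \Omega }}^{{\rm {\bf cf}}}}\widetilde{H}$ for every $\widetilde{H}\in\mathcal{D}(H)$; these provable hypersequents are the raw material from which the members of $\mathcal{D}(G''\vert\{[S]_H\}_1)$ will be extracted.

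Next I would use the freedom in the order of the one-variable eliminations hidden inside $\mathcal{D}(\cdot)=D_{v_l(\cdot)}(\cdot)$, eliminating first the eigenvariables internal to the two copies (indexed by $K=v([S']_H)$ for the relevant splitting unit $[S']_H\subseteq G'$) and only afterwards those of $G'$. Here Lemma 3.11 and Lemma 3.14 do the essential work. In Case (i), where both copies are splicing units, Lemma 3.11 collapses each copy to a single, eigenvariable-free sequent, and since $\{[S]_H\}_2$ is a copy of $\{[S]_H\}_1$ the two collapsed sequents are literally identical; thus each $\widetilde{H}\in\mathcal{D}(H)$ carries these two identical sequents side by side. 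In Case (ii), where both are splitting units, Lemma 3.14 together with Definition 3.13 reduces $\{[S]_H\}_1$ by splicing to $\Gamma,p_k,\Delta\Rightarrow p_k$ and $\{[S]_H\}_2$ to $\Gamma,p_{k'},\Delta\Rightarrow p_{k'}$ with identical ground parts $\Gamma,\Delta$, and the final splits $D_k,D_{k'}$ turn these into the sequents $\Gamma\Rightarrow t$ and $\Delta\Rightarrow t$.

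The core step is then to invoke the constraint of Construction 3.15. In Case (i) one has $G''=G'$, so the member of $\mathcal{D}(G'\vert\{[S]_H\}_1)$ is obtained from the corresponding $\widetilde{H}\in\mathcal{D}(H)$ by contracting the two identical sequents; this contraction is a genuine instance of $(EC_\Omega)$ because the two copies are splicing units, which is precisely the licensing condition of Construction 3.15(i). In Case (ii) the constraint $\{[S]_H\}_1\vert\{[S]_H\}_2\subseteq\langle H\rangle_K^-$ (or $\subseteq\langle H\rangle_K^+$) forces $D_{K\cup K'}(H)=\{G_1'\vert\Gamma\Rightarrow t\vert\Gamma\Rightarrow t,\,G_2'\vert\Delta\Rightarrow t,\,G_2''\vert\Delta\Rightarrow t\}$ with $G_2''$ a copy of $G_2'$; the two copies of $\Gamma\Rightarrow t$ then occur in one hypersequent and contract by $(EC_\Omega)$ to $G_1'\vert\Gamma\Rightarrow t$, while the redundant component $G_2''\vert\Delta\Rightarrow t$, being a copy of $G_2'\vert\Delta\Rightarrow t$, is discarded, which is exactly the passage from $G'$ to $G''=G'\backslash G_2''$. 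Carrying the remaining eliminations of the eigenvariables of $G'$ through uniformly, I would conclude that each $H'\in\mathcal{D}(G''\vert\{[S]_H\}_1)$ is derivable from a member of $\mathcal{D}(H)$ either directly or by one application of $(EC_\Omega)$, and hence $\vdash_{{\rm {\bf GpsUL}}_{\rm {\bf \Omega }}}H'$.

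I expect the main obstacle to lie in the bookkeeping of Case (ii): I must check that the order of the eliminations $D_i$ can always be arranged so that the two split sequents $\Gamma\Rightarrow t$ genuinely land in a common hypersequent component---so that the contraction performed really is an instance of $(EC_\Omega)$ in the sense of Construction 3.15---and that the permutation $\sigma$ handling the non-cyclic indices of a splitting unit is applied identically to both copies, so that $G_2''$ is indeed a copy of $G_2'$. Verifying that discarding $G_2''$ deletes no sequent needed in another branch, and that this matching of branches between $\mathcal{D}(H)$ and $\mathcal{D}(G''\vert\{[S]_H\}_1)$ stays consistent across every branch generated by the earlier splittings, is where the argument is most delicate.
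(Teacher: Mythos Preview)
The paper states this lemma without proof; it appears immediately before Section~4 with no accompanying argument, presumably because the author regards the constraint of Construction~3.15 as having been designed precisely so that the verification is routine. So there is no ``paper's proof'' to compare against; your proposal is filling a gap the paper leaves implicit.

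Your outline is essentially the intended argument: invoke Theorem~3.8 on $H$, process the eigenvariables of the two copies first via Lemma~3.11 (splicing case) or Lemma~3.14 (splitting case), and then contract the resulting duplicates by $(EC_\Omega)$. One point deserves sharpening. In Case~(ii) you write that ``the constraint $\{[S]_H\}_1\vert\{[S]_H\}_2\subseteq\langle H\rangle_K^-$ \ldots\ forces $D_{K\cup K'}(H)=\{\ldots\}$'', but in Construction~3.15 that constraint is stated for splitting units $[S']_H\subseteq G'$ with $K=v([S']_H)$, not for $[S]_H$ itself. Its job is not to produce the displayed form of $D_{K\cup K'}(H)$ in~(ii)---that form is simply \emph{postulated} as part of the hypothesis of the rule---but rather to guarantee that later splittings along units inside $G'$ never send the two collapsed copies (the two $\Gamma\Rightarrow t$'s, or the two identical spliced sequents in Case~(i)) into different output hypersequents. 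In other words, the constraint is what makes your final contraction step legal at every branch of $\mathcal{D}$, which is exactly the bookkeeping worry you flag in your last paragraph. Once you separate these two roles cleanly, the argument goes through.
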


\section{Density elimination for ${\rm {\bf GpsUL}}^\ast $}

In this section,  we adapt the separation algorithm of branches in [6] to
${\rm {\bf GpsUL}}^\ast $ and prove the following theorem.

\begin{theorem}
Density elimination holds for ${\rm {\bf GpsUL}}^\ast $.
\end{theorem}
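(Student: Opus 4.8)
The plan is to prove density elimination by deleting applications of the density rule $(D)$ one at a time, proceeding by induction on the number of $(D)$-applications occurring in a given ${\rm {\bf GpsUL}}^{\ast D}$-proof. Fix a topmost occurrence of $(D)$, with premise $H_0\equiv G\vert \Pi\Rightarrow p\vert \Gamma, p, \Delta\Rightarrow B$ and conclusion $G\vert \Gamma, \Pi, \Delta\Rightarrow B$; since it is topmost, its premise is already provable in ${\rm {\bf GpsUL}}^\ast$, and by the eigenvariable condition $p$ occurs in none of $G, \Gamma, \Pi, \Delta, B$. By Theorem 2.8 I may assume the proof of $H_0$ uses $(WCT)$ in place of $(CUT)$, which is indispensable here because cut-elimination fails (Theorem 2.5). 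It then suffices to manufacture a density-free ${\rm {\bf GpsUL}}^\ast$-proof of $G\vert \Gamma, \Pi, \Delta\Rightarrow B$ from the given proof of $H_0$; iterating removes every $(D)$.

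Next I would pass to the bookkeeping system ${\rm {\bf GpsUL}}_\Omega$ by designating $p$ as the unique eigenvariable and tagging each of its occurrences throughout the proof with a distinct identification number (Definition 2.9), so that $H_0$ becomes a closed hypersequent $H$ with $v_l(H)=v_r(H)=\{j\}$ and Proposition 2.10 guarantees closedness is preserved by each rule. By Definition 3.7(ii) one has $\mathcal{D}(H)=D_j(H)=\{G\vert \Gamma, \Pi, \Delta\Rightarrow B\}$, so the sought conclusion \emph{is} the single member of $\mathcal{D}(H)$. If the proof of the premise uses no external contraction, it is a proof in ${\rm {\bf GL}}_{\rm {\bf \Omega}}^{{\rm {\bf cf}}}$, and Theorem 3.8 — whose engine is the splitting operation (Construction 3.1, Corollary 3.3) when $p_j$ occurs on both sides of one sequent, and the splicing operation (Construction 3.4, Corollary 3.6) when $p_j$ is a succedent of one sequent and sits inside the antecedent of another — immediately delivers $\vdash_{{\rm {\bf GL}}_{\rm {\bf \Omega}}^{{\rm {\bf cf}}}} G\vert \Gamma, \Pi, \Delta\Rightarrow B$, settling this case.

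The substantive work, and the step I expect to be the main obstacle, is incorporating external contraction, which is excluded from ${\rm {\bf GL}}_{\rm {\bf \Omega}}^{{\rm {\bf cf}}}$ and therefore outside the reach of Theorem 3.8. The difficulty is that the translation of $(EC)$ to the $\Omega$-setting is not literal identification but the contraction of two \emph{copies} of a minimal closed unit, and—because a splitting operation replaces one hypersequent by a \emph{set} of hypersequents—the two copies of a contracted unit may be split across $\left\langle H\right\rangle_K^-$ and $\left\langle H\right\rangle_K^+$, so $(EC)$ no longer commutes with density elimination. To manage this I would classify each minimal closed unit, via Lemmas 3.9, 3.11 and 3.14 together with Definition 3.10, as a splicing unit or a splitting unit, and discharge each contraction by the constrained external contraction rule $\langle EC_\Omega^\ast\rangle$ of Construction 3.15—which is trivial ($G'':=G'$) for splicing units and cuts off the redundant copy $G_2''$ for splitting units—with Lemma 3.16 certifying provability in ${\rm {\bf GpsUL}}_{\rm {\bf \Omega}}$ of every member of $\mathcal{D}(G''\vert \left\{[S]_H\right\}_1)$. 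The delicate point is the order of discharge: adapting the separation algorithm of [6], I would \emph{not} induct on the number of branches but on the height of the complete set of maximal $(pEC)$-nodes, separating branches by derivation-grafting so that each $\langle EC_\Omega^\ast\rangle$-node is presented with a configuration satisfying the hypotheses of Construction 3.15. Verifying that this induction is well-founded and that the constraint conditions genuinely hold at every maximal $(pEC)$-node is the crux of the whole argument.

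Once all eigenvariables have been eliminated and every contraction discharged in ${\rm {\bf GpsUL}}_{\rm {\bf \Omega}}$, the resulting hypersequents are eigenvariable-free, hence bona fide ${\rm {\bf GpsUL}}^\ast$-hypersequents; forgetting the $\Omega$-bookkeeping, I would then reassemble the finitely many members of the resulting $\mathcal{D}$-set into the single target $G\vert \Gamma, \Pi, \Delta\Rightarrow B$ using $(EW)$ and $(EC)$ of ${\rm {\bf GpsUL}}^\ast$. This yields a density-free proof of the conclusion of the fixed topmost $(D)$, and the outer induction on the number of density applications then establishes density elimination for ${\rm {\bf GpsUL}}^\ast$.
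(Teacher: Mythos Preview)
Your overall strategy matches the paper's: reduce to a topmost $(D)$, pass through Theorem~2.8 to replace $(CUT)$ by $(WCT)$, move into the $\Omega$-bookkeeping system, handle the contraction-free case via splitting/splicing (Theorem~3.8), and treat $(EC)$ via the constrained rule $\langle EC_\Omega^\ast\rangle$ with an induction on the height $o(I)$ of complete sets of maximal $(pEC)$-nodes rather than on branch count. That is precisely the paper's architecture.

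There are, however, two concrete gaps. First, you omit the five-step preprocessing (from~[6], summarised just after the statement of Theorem~4.1) that converts the ${\rm {\bf GpsUL}}^{\ast\ast}$-proof $\tau$ into a proof $\tau^\ast$ of $G\vert G^\ast$ in ${\rm {\bf GL}}_{\rm {\bf \Omega}}^{{\rm {\bf cf}}}$: replacing $(\wedge_r),(\vee_l)$ by their weakened forms, converting each $(EC)$ into an identity move that leaves behind a $(pEC)$-node, deleting $(EW)$, substituting $\top/\bot$ for dangling $p$'s, and only then assigning identification numbers. Without this step your claim that the premise becomes a closed $H$ with $v_l(H)=v_r(H)=\{j\}$ and $\mathcal{D}(H)=\{G\vert\Gamma,\Pi,\Delta\Rightarrow B\}$ does not describe the object one actually works with; the preprocessed root is $G\vert G^\ast$ with many copies, which is why the paper formulates the target via the \emph{strong} density rule $(\mathcal{D}_0)$ and needs a separate passage from $\mathcal{D}(G^{\medstar})$ back to $\mathcal{D}_0(G_0)$ via Lemma~9.1 of~[6]. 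Your later invocation of ``maximal $(pEC)$-nodes'' presupposes this preprocessing, so the outline is internally inconsistent here. Second, and more seriously, you correctly name as ``the crux'' the verification that the hypotheses of Construction~3.15 are met at each step of the induction, but you do not carry it out. This verification is exactly the paper's new technical content: Lemma~4.7 and in particular Property~(A), showing that $G_{I_l}^{\medstar}$ contains at most one copy of $G_{H_I^V:G''}^{\medstar(J)}\vert\widehat{S''}$, via Property~(B) and a two-case analysis on whether $S_1\in\langle G_1\vert S_1\rangle_{G_b\vert S_j^c}$. In Case~1 one shows no splitting unit outside $G_{H_I^V:G''}^{\medstar(J)}\vert\widehat{S''}$ can separate the two copies; in Case~2 one shows the duplicated $\Delta\Rightarrow t$ sequents survive intact in $\langle\langle G_{I_l}^{\medstar}\rangle_k^+\rangle_{k'}^+$ and can be merged by ordinary $(EC)$ after applying $\mathcal{D}$. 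This is where the old algorithm of~[6] breaks down for ${\rm {\bf GpsUL}}_\Omega$ (because $\mathcal{D}$ now returns a \emph{set} of hypersequents, so Lemma~7.11 of~[6] fails), and supplying this argument is essentially the whole proof.
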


The proof of Theorem 4.1 runs as follows. It is sufficient to prove
that the following strong density rule
\[
\cfrac{G_0 \equiv G'\vert \left\{ {\Gamma _i, p, \Delta _i \Rightarrow A_i }
\right\}_{i=1\cdots n} \vert \left\{ {\Pi _j \Rightarrow p{\kern 1pt}}
\right\}_{j=1\cdots m} }{\mathcal{D}_0 \left( {G_0 } \right)\equiv G'\vert
\{\Gamma _i, \Pi _j, \Delta _i \Rightarrow A_i \}{\kern 1pt}_{i=1\cdots
n;j=1\cdots m} }\left( {\mathcal{D}_0 } \right)
\]
is admissible in ${\rm {\bf GpsUL}}^\ast $,  where $n, m\geqslant 1$,  $p$ does
not occur in $G', \Gamma _i, \Delta _i, {\kern 1pt}A_i, \Pi _j $ for all
$1\leqslant i\leqslant n$,  $1\leqslant j\leqslant m$.

Let $\tau $ be a proof of $G_0 $ in ${\rm {\bf GpsUL}}^{\ast \ast }$ by
Theorem 2.8. Starting with $\tau $,  we construct a proof $\tau ^\ast $ of
$G\vert G^\ast $ in ${\rm {\bf GL}}_{\rm {\bf \Omega }}^{{\rm {\bf cf}}}
$ by a preprocessing of $\tau $ described in Section 4 in [6].

In Step 1 of preprocessing of $\tau $,  a proof $\tau '$ is constructed by
replacing inductively all applications of $(\wedge _r )$ and $(\vee _l )$ in
$\tau $ with $(\wedge _{rw} )$ and $(\vee _{lw} )$ followed by an
application of $(EC)$,  respectively. In Step 2,  a proof $\tau ''$ is
constructed by converting all $\cfrac{G_i '''\vert \{S_i^c \}^{m_i '}}{G_i
'''\vert S_i^c }(EC^\ast )\in \tau '$ into $\cfrac{G_i ''\vert \{S_i^c
\}^{m_i '}}{G_i ''\vert \{S_i^c \}^{m_i '}}(ID_\Omega )$,  where $G_i
'''\subseteq G_i ''$. In Step 3,  a proof $\tau '''$ is constructed by
converting $\cfrac{G'}{G'\vert S'}(EW)\in \tau ''$ into
$\cfrac{G''}{G''}(ID_\Omega )$,  where $G''\subseteq G'$. In Step 4,  a proof
$\tau ''''$ is constructed by replacing some $G'\vert \Gamma ', p, \Delta
'\Rightarrow A'\in \tau '''$ (or $G'\vert \Gamma '\Rightarrow p\in \tau '''$
) with $G'\vert \Gamma ', \top, \Delta '\Rightarrow A'$ (or $G'\vert \Gamma
'\Rightarrow \bot )$. In Step 5,  a proof $\tau ^\ast $ is constructed by
assigning the unique identification number to each occurrence of $p$ in $\tau
''''$.  Let $H_i^c =G_i '\vert \{S_i^c \}^{m_i }$ denote the unique node of
$\tau ^\ast $ such that $H_i^c \leqslant G_i ''\vert \{S_i^c \}^{m_i }$and
$S_i^c $ is the focus sequent of $H_i^c $ in $\tau ^\ast $. We call $H_i^c
$,  $S_i^c $ the $i$-th $(pEC)$-node of $\tau ^\ast $ and $(pEC)$-sequent,
respectively. If we ignore the replacements from Step 4,  each sequent of $G$
is a copy of some sequent of $G_0 $ and,  each sequent of $G^\ast $ is a copy
of some contraction sequent in $\tau '$.

Now,  starting with $G\vert G^\ast $ and its proof $\tau ^\ast $,  we
construct a proof $\tau ^{\medstar}$ of $G^{\medstar}$  in ${\rm {\bf GpsUL}}_\Omega $ such
that each sequent of $G^{\medstar}$  is a copy of some sequent of $G$. Then $\vdash
_{{\rm {\bf GpsUL}}_\Omega } \mathcal{D}(G^{\medstar})$ by Theorem 3.8 and Lemma 3.16.  Then $\vdash _{{\rm {\bf GpsUL}}^\ast } \mathcal{D}_0
(G_0 )$ by Lemma 9.1 in [6].

In [6],  $G^{\medstar}$  is constructed by eliminating $(pEC)$-sequents in $G\vert
G^\ast $ one by one. In order to control the process,  we introduce the set
$I=\{H_{i_1 }^c, \cdots, H_{i_m }^c \}$ of maximal $(pEC)$-nodes of $\tau
^\ast $ (See Definition 4.2) and the set ${\rm {\bf I}}$ of the branches
relative to $I$ and construct $G_{\rm {\bf I}}^{\medstar}$ such that $G_{\rm {\bf
I}}^{\medstar}$ doesn't contain the contraction sequents lower than any node in $I$, i.e.,  $S_j^c \in G_{\rm {\bf I}}^{\medstar}$ implies $H_j^c \vert \vert H_i^c $ for
all $H_i^c \in I$. The procedure is called the separation algorithm of
branches in [6].

The problem we encounter in ${\rm {\bf GpsUL}}_\Omega $ is that Lemma 7.11
of [6]  doesn't hold because new derivation-splitting operations
make the conclusion of $(\mathcal{D})$-rule to be a set of hypersequents
rather than one hypersequent. Then $G_{\ddagger}^{m_{q'} } $ generally can't be contracted to $G_{\ddagger} $ in Step 2 of Stage 1 in Main algorithm in [6] and,  $\{G_{{\rm {\bf
I}}_{l\backslash r} }^{\medstar}\}^{m_{q'} }$ can't be contracted to $G_{{\rm {\bf
I}}_{l\backslash r} }^{\medstar}$ in Step 2 of Stage 2. Furthermore,  we sometimes
can't construct some branches to $I$ in ${\rm {\bf GpsUL}}_\Omega $ before
we construct $\tau _{\bf I}^{\medstar}$. Therefore we have to introduce a new induction
strategy for ${\rm{\bf GpsUL}}_\Omega $ and don't perform the induction on the number of branches. First we give some primary definitions and lemmas.

\begin{definition}
A $(pEC)$-node $H_i^c $ is maximal if no other $(pEC)$-node is higher than $H_i^c $.
Define $I_0 $ to be the set of maximal $(pEC)$-nodes in $\tau ^\ast $. A
nonempty subset $I$ of $I_0 $ is complete if $I$ contains all maximal
$(pEC)$-nodes higher than or equal to the intersection node $H_I^V $ of $I$.
Define $H_I^V =H_i^c $ if $I=\{H_i^c \}$, i.e.,  the intersection node of a
single node is itself.
\end{definition}

\begin{proposition}
(i) $H_i^c \parallel H_j^c $ for all $i\ne j$,  $H_i^c, H_j^c \in I_0 $.

(ii) Let $I$ be complete and $H_j^c \geqslant H_I^V $. Then $H_j^c \leqslant
H_i^c $ for some $H_i^c \in I$.

(iii) $I_0 $ is complete and $\{H_i^c \}$ is complete for all $H_i^c \in I_0
$.

(iv) If $I\subseteq I_0 $ is complete and $\left| I \right|>1$,  then $I_l $
and $I_r $ are complete,  where $I_l $ and $I_r $ denote the sets of all
maximal $(pEC)$-nodes in the left subtree and right subtree of $\tau ^\ast
(H_I^V )$,  respectively.

(v) If $I_1, I_2 \subseteq I_0 $ are complete,  then $I_1 \subseteq I_2
$, $I_2 \subseteq I_1 $ or $I_1 \bigcap I_2 =\emptyset $.
\end{proposition}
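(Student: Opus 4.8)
The plan is to forget the logical content entirely and treat $\tau^\ast$ as a finite rooted tree ordered by $\leqslant$, with the conclusion as the root, so that ``$H'$ higher than $H$'' means $H'>H$ (toward the leaves), a \emph{maximal} $(pEC)$-node is simply a maximal element of the poset of $(pEC)$-nodes, and the intersection node $H_I^V$ is the \emph{meet} of $I$, i.e.\ the highest node lying below every member of $I$ (the point at which the branches to the members of $I$ diverge). With this dictionary, completeness of $I$ reads: every maximal $(pEC)$-node $H_k^c\geqslant H_I^V$ lies in $I$. Now (i) is immediate: if $H_i^c\leqslant H_j^c$ with $i\ne j$, then $H_j^c$ is a $(pEC)$-node strictly higher than $H_i^c$, contradicting maximality of $H_i^c$, and the symmetric case is identical, so $H_i^c\parallel H_j^c$. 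Part (iii) is equally short: $I_0$ contains every maximal $(pEC)$-node by definition, hence a fortiori all those $\geqslant H_{I_0}^V$; and for a singleton $H_{\{H_i^c\}}^V=H_i^c$, while by (i) the only maximal $(pEC)$-node $\geqslant H_i^c$ is $H_i^c$ itself, so $\{H_i^c\}$ is complete. For (ii) I would use finiteness: given a $(pEC)$-node $H_j^c\geqslant H_I^V$, the set of $(pEC)$-nodes $\geqslant H_j^c$ is finite and nonempty; choose any $\leqslant$-maximal element $H_i^c$. Then $H_i^c$ is globally maximal, since a $(pEC)$-node strictly above it would also be $\geqslant H_j^c$ and violate the choice; and $H_i^c\geqslant H_j^c\geqslant H_I^V$, so completeness of $I$ gives $H_i^c\in I$ with $H_j^c\leqslant H_i^c$.

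For (iv) the key observation is the identity $I_l=I\cap L$ and $I_r=I\cap R$, where $L$ and $R$ are the left and right subtrees sitting above the two premises of the branching rule applied at $H_I^V$; this rule is genuinely branching because $|I|>1$ forces $H_I^V$ to be the split point of at least two incomparable members of $I$. Indeed, every maximal $(pEC)$-node of $L$ lies $\geqslant H_I^V$ and is therefore in $I$ by completeness, while conversely any member of $I$ lying in $L$ is maximal there; the same reasoning shows $I_l$ is nonempty, since both subtrees must meet $I$ (were all of $I$ inside $L$, its meet would rise strictly above $H_I^V$). To see $I_l$ is complete, take a maximal $(pEC)$-node $H_k^c\geqslant H_{I_l}^V$. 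Since $H_{I_l}^V$ lies in $L$ and above $H_I^V$, we get $H_k^c\geqslant H_I^V$, so $H_k^c\in I$ by completeness of $I$; and $H_k^c$ lies in $L$, whence $H_k^c\in I\cap L=I_l$. The case of $I_r$ is symmetric.

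For (v) I would run a laminar-family argument on the two branch points $H_{I_1}^V$ and $H_{I_2}^V$, which in a tree are either comparable or incomparable. If $H_{I_1}^V\leqslant H_{I_2}^V$, then every member of $I_2$ is $\geqslant H_{I_2}^V\geqslant H_{I_1}^V$, so completeness of $I_1$ yields $I_2\subseteq I_1$; the opposite inequality gives $I_1\subseteq I_2$. If instead $H_{I_1}^V\parallel H_{I_2}^V$, then no node can lie above both (two common descendants-toward-the-root on a single branch would be comparable), so the upward cones $\{x:x\geqslant H_{I_1}^V\}$ and $\{x:x\geqslant H_{I_2}^V\}$ are disjoint; as $I_1$ and $I_2$ are contained in these respective cones, $I_1\cap I_2=\emptyset$.

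The routine part here is the order bookkeeping, and I expect no substantive obstacle; the one place to be careful is fixing the conventions at the outset so that the clauses match up, namely that ``higher'' is the strict tree order away from the root, that $H_I^V$ is a meet rather than a join, and—most delicately, in (iv)—that maximality of a $(pEC)$-node \emph{within} a subtree coincides with global maximality (because everything above such a node stays in the subtree) and that both subtrees genuinely intersect $I$. Once these points are pinned down, each of the five clauses reduces to a one- or two-line structural argument.
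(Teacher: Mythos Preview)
Your proposal is correct, and for part (v)---the only part the paper actually proves---you use exactly the same trichotomy on $H_{I_1}^V$ versus $H_{I_2}^V$ that the paper does. The paper leaves (i)--(iv) entirely tacit, so your explicit tree-order arguments for those parts simply fill in what the paper omits rather than diverge from it.
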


\begin{proof} (v) $I_1 \subseteq I_2 $, $I_2 \subseteq I_1 $ or $I_1 \bigcap I_2
=\emptyset $ holds by $H_{I_2 }^V \leqslant H_{I_1 }^V $, $H_{I_1 }^V
\leqslant H_{I_2 }^V $ or $H_{I_2 }^V \parallel H_{I_1 }^V $,  respectively.
\end{proof}

\begin{definition}
A labeled binary tree $\rho $ is constructed inductively by the following
operations.

(i) The root of $\rho $ is labeled by $I_0 $ and leaves labeled $\{H_i^c
\}\subseteq I_0 $.

(ii) If an inner node is labeled by $I$,  then its parent nodes are labeled
by $I_l $ and $I_r $,  where $I_l $ and $I_r $ are defined in Proposition 4.3
(iv).
\end{definition}

\begin{definition}
We define the height $o(I)$ of $I\in \rho $ by letting $o(I)=1$ for each
leave $I\in \rho $ and,  $o(I)=\max \{o(I_l ), o(I_r )\}+1$ for any non-leaf
node.
\end{definition}

Note that in Lemma 7.11 in [6] only uniqueness of $G_{H_1: G_2 }^{\medstar(J)}
\vert \widehat{S_2 }$ in $G_{H_{i_k }^c }^{\medstar}$ doesn't hold in ${\rm {\bf
GpsUL}}_\Omega $ and the following lemma holds in ${\rm {\bf GpsUL}}_\Omega
$.

\begin{lemma}
Let $\cfrac{G_1 \vert S_1 {\kern 1pt}{\kern 1pt}{\kern 1pt}{\kern 1pt}{\kern
1pt}{\kern 1pt}{\kern 1pt}{\kern 1pt}{\kern 1pt}G_2 \vert S_2
}{H_1 \equiv G_1 \vert G_2 \vert H''}(II)\in \tau ^\ast $,  $\tau _{G_b \vert
S_j^c }^\ast \in \tau _{H_i^c }^{\medstar}$,  $\cfrac{G_b \vert \left\langle {G_1
\vert S_1 } \right\rangle _{S_j^c } {\kern 1pt}{\kern 1pt}{\kern 1pt}{\kern
1pt}{\kern 1pt}G_2 \vert S_2 }{H_2 \equiv G_b \vert \left\langle {G_1 } \right\rangle
_{S_j^c } \vert G_2 \vert H''}(II)\in \tau _{G_b \vert S_j^c }^\ast $. Then
$H''$ is separable in $\tau _{H_i^c }^{\medstar(J)} $ and there are some copies of
$G_{H_1: G_2 }^{\medstar(J)} \vert \widehat{S_2 }$ in $G_{H_i^c }^{\medstar}$.
\end{lemma}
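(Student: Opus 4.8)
The plan is to mirror the proof of Lemma~7.11 in [6] and adapt it to the non-commutative calculus ${\rm {\bf GpsUL}}_\Omega $, keeping its separation-algorithm skeleton but weakening the uniqueness conclusion to a mere existence conclusion. As the text immediately preceding the statement records, the only clause of Lemma~7.11 that breaks down is the uniqueness of $G_{H_1: G_2 }^{\medstar(J)} \vert \widehat{S_2 }$ in $G_{H_i^c }^{\medstar}$; every other clause, in particular the separability of $H''$, survives unchanged. Accordingly I would run the same induction as in [6], now governed by the height $o(I)$ of the complete set of maximal $(pEC)$-nodes (Definition~4.5) instead of by the number of branches, and track what happens to the two-premise inference $(II)$ as the construction of $\tau _{H_i^c }^{\medstar}$ processes the subtrees above its premises $G_1 \vert S_1 $ and $G_2 \vert S_2 $.

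First I would establish separability of $H''$. Since $H''$ is exactly the part of the conclusion introduced by the displayed $(II)$ at $H_1 $ in $\tau ^\ast $, and it reappears verbatim in $H_2 \equiv G_b \vert \langle G_1 \rangle _{S_j^c } \vert G_2 \vert H''$ inside $\tau _{G_b \vert S_j^c }^\ast $, the sequents of $H''$ are all inherited from a single application of $(II)$ and hence lie above a single node of $\tau ^\ast $. By Proposition~4.3(iv) the sets $I_l $ and $I_r $ of maximal $(pEC)$-nodes in the two subtrees of $\tau ^\ast (H_I^V )$ are themselves complete, so the construction of $\tau _{H_i^c }^{\medstar}$ never merges contraction-sequents lying below distinct maximal $(pEC)$-nodes into $H''$. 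Thus no sequent of $H''$ is entangled across the left/right split, which is precisely the separability required; this portion of the argument is combinatorially identical to its counterpart in [6] and uses nothing specific to commutativity.

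Next I would construct the copies of $G_{H_1: G_2 }^{\medstar(J)} \vert \widehat{S_2 }$. Applying the separation algorithm to the subproofs of the two premises and grafting the result back through $(II)$ yields, as in [6], a hypersequent of the required shape whose succedent component is the modified $\widehat{S_2 }$. The genuinely new phenomenon is that whenever the eigenvariable eliminated at this stage sits inside a splitting unit (Definition~3.10), the derivation-splitting operation of Construction~3.1 replaces the single premise-hypersequent by the two-element set $D_j(H)$ of Definition~3.7(i); propagating both members through $(II)$ produces two distinct but equally admissible copies of $G_{H_1: G_2 }^{\medstar(J)} \vert \widehat{S_2 }$ in $G_{H_i^c }^{\medstar}$. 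Iterating over all splitting branches gives the finitely many copies asserted by the lemma, each of which is derivable in ${\rm {\bf GpsUL}}_\Omega $ by Theorem~3.8 together with Lemma~3.16. This proliferation is exactly why the uniqueness clause of Lemma~7.11 in [6] fails and why the present statement claims only that ``there are some copies''.

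The main obstacle is controlling this proliferation when reconciling the collection of copies against the constrained external contraction rule $\langle EC_\Omega ^\ast \rangle $ of Construction~3.15, whose clause~(ii) is sensitive to whether the two copies of the minimal closed unit $[S]_H $ are splitting or splicing units and to which of $\langle H \rangle _K^- $, $\langle H \rangle _K^+ $ absorbs them. Verifying that at least one copy of $G_{H_1: G_2 }^{\medstar(J)} \vert \widehat{S_2 }$ survives every such contraction, while knowingly abandoning the uniqueness that held in the commutative case, is the delicate bookkeeping step; it is precisely what forces the induction on $o(I)$ of Definition~4.5 in place of the branch-count induction of [6].
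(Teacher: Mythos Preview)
Your core plan---mirror Lemma~7.11 of [6] and simply drop the uniqueness clause---is exactly the paper's approach: the paper gives no separate proof at all, relying on the remark immediately preceding the statement that in ${\rm {\bf GpsUL}}_\Omega$ ``only uniqueness of $G_{H_1:G_2}^{\medstar(J)}\vert\widehat{S_2}$ \ldots\ doesn't hold'' and the rest of Lemma~7.11 goes through unchanged.

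That said, much of what you write does not belong to this lemma and reflects a conflation with Lemma~4.7. The present statement is about $\tau_{H_i^c}^{\medstar}$ for a \emph{single} maximal $(pEC)$-node, i.e.\ the base case $I=\{H_i^c\}$ with $o(I)=1$, built by Constructions~7.3 and~7.7 of [6]; there is no induction on $o(I)$ to run here, and Proposition~4.3(iv) (which needs $|I|>1$ to produce $I_l$, $I_r$) is inapplicable, so your separability argument in the second paragraph does not go through as written. Your final paragraph---controlling the proliferation of copies against $\langle EC_\Omega^\ast\rangle$ and checking that at least one copy survives---is not part of this lemma at all; it is precisely the content of Property~(A) inside the proof of Lemma~4.7, where the two copies $\{G_{H_I^V:G''}^{\medstar(J)}\vert\widehat{S''}\}_1$ and $\{G_{H_I^V:G''}^{\medstar(J)}\vert\widehat{S''}\}_2$ are shown to be contractible. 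For the lemma at hand you need only reproduce the base-case argument of [6] and observe that the splitting operation of Construction~3.1 may duplicate $G_{H_1:G_2}^{\medstar(J)}\vert\widehat{S_2}$, yielding ``some copies'' rather than a unique one.
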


\begin{lemma}{\bf (New main algorithm for ${\rm {\bf GpsUL}}_\Omega )$}
Let $I$ be a complete subset of $I_0 $ and $\overline I =\{H_i^c: H_i^c
\leqslant H_j^c \, \, for\, \, some\, \, \, H_j^c \in I\}$.
Then there exist one close hypersequent $G_I^{\medstar}\subseteq _c G\vert G^\ast $
and its derivation $\tau _I^{\medstar}$ in ${\rm {\bf GpsUL}}_\Omega $ such that

(i) $\tau _I^{\medstar}$ is constructed by initial hypersequent
$\cfrac{\underline{{\kern 1pt}{\kern 1pt}{\kern 1pt}{\kern 1pt}{\kern
1pt}{\kern 1pt}{\kern 1pt}{\kern 1pt}{\kern 1pt}{\kern 1pt}{\kern 1pt}{\kern
1pt}{\kern 1pt}{\kern 1pt}{\kern 1pt}{\kern 1pt}{\kern 1pt}{\kern 1pt}{\kern
1pt}{\kern 1pt}{\kern 1pt}{\kern 1pt}{\kern 1pt}}}{G\vert G^\ast
}\left\langle {\tau ^\ast } \right\rangle $,  the fully constraint
contraction rules of the form $\cfrac{\underline{{\kern 1pt}{\kern 1pt}{\kern
1pt}{\kern 1pt}G_2 {\kern 1pt}{\kern
1pt}{\kern 1pt}{\kern 1pt}}}{G_1
}\left\langle {EC_\Omega ^\ast } \right\rangle $ and elimination rule of the
form \[\cfrac{\underline{G_{b_1} \vert
S_{j_1 }^c \, \, \, G_{b_2} \vert S_{j_2 }^c \, \, \, \cdots \, \, \, G_{b_w} \vert S_{j_w }^c }}{G_{{\rm {\bf I}}_{\rm {\bf j}} }^\ast =\left\{ {G_{b_k} }
\right\}_{k=1}^w \vert G_{\mathcal{I}_{\rm {\bf j}} }^\ast }\left\langle
{\tau _{{\rm {\bf I}}_{\rm {\bf j}} }^\ast } \right\rangle, \]
where
$1\leqslant w\leqslant |I|,   H_{j_k }^c\leftrightsquigarrow H_{j_l }^c $ for all $1\leqslant k<l\leqslant w$,   $I_{\rm {\bf j}} =\left\{ {H_{j_1 }^c, \cdots
, {\kern 1pt}{\kern 1pt}{\kern 1pt}{\kern 1pt}H_{j_w }^c {\kern 1pt}}
\right\}\subseteq \overline I $,  $\mathcal{I}_{\rm {\bf j}} =\{S_{j_1 }^c
, S_{j_2 }^c, \cdots, S_{j_w }^c \}$,  ${\rm {\bf
I}}_{\rm {\bf j}} =\{G_{b_1} \vert S_{j_1 }^c, G_{b_2} \vert S_{j_2 }^c
, \cdots, G_{b_w} \vert S_{j_w}^c \}$,  $G_{b_k} \vert S_{j_k }^c $ is closed for all $1\leqslant k\leqslant w$. Then $H_i^c \not {\leqslant }H_j^c $ for each $S_j^c \in
G_{\mathcal{I}_{\rm {\bf j}} }^\ast $ and $H_i^c \in I$.

(ii) For all $H\in \overline \tau _{I}^{\medstar} $,  let
\[
\partial _{\tau_{I}^{\medstar}} (H): =\left\{ {\begin{array}{l}
 G\vert G^\ast \, \, \, H\mbox{\, \, is the root of \, \, } \tau_{I}^{\medstar}\, \, or\, \, G_2\, \, in\, \, \cfrac{\underline{G_2}}{G_1 }\left\langle {EC_\Omega ^\ast \, \, or\, \, ID_\Omega } \right\rangle
\in \overline \tau_{I}^{\medstar},  \\
 H_{j_k }^c \quad G_{b_k} \vert S_{j_k }^c \, \, in\, \, \tau _{{\rm {\bf
I}}_{\rm {\bf j}} }^\ast \in \overline \tau_{I}^{\medstar}\, \, for\, \, some\, \, 1\leqslant k\leqslant w, {\kern 1pt} \\
 \end{array}} \right.
\]
where,  $\overline \tau_{I}^{\medstar}{\kern 1pt}{\kern 1pt}{\kern
1pt}{\kern 1pt}{\kern 1pt}$ is the skeleton of $\tau_{I}^{\medstar}$,  which is defined by Definition 7.13 [6]. Then $\partial _{\tau_{I}^{\medstar} } \left( {G_{{\rm {\bf I}}_{\rm {\bf j}} }^\ast }
\right)\leqslant \partial _{\tau_{I}^{\medstar} } \left( {G_{b_k}
\vert S_{j_k }^c } \right)$ for some $1\leqslant k\leqslant w$ in $\tau
_{{\rm {\bf I}}_{\rm {\bf j}} }^\ast $;

(iii) Let $H\in \overline \tau_{I}^{\medstar} $ and $G\vert G^\ast
<\partial _{\tau_{I}^{\medstar} } \left( H \right)\leqslant H_I^V $
then $G_{H_I^V: H}^{\medstar(J)} \in \tau _I^{\medstar}$ and it is built up by
applying the separation algorithm along $H_I^V $ to $H$,  and is an upper
hypersequent of either $\left\langle {EC_\Omega ^\ast {\kern 1pt}}
\right\rangle $ if it is applicable,  or $\left\langle {ID_\Omega }
\right\rangle $ otherwise.

(iv) $S_j^c \in G_{I}^{\medstar}$ implies $H_j^c  \| H_i^c $ for all $H_i^c
\in I$ and,  $S_j^c \in G_{\mathcal{I}_{\rm {\bf j}} }^\ast $ for some $\tau
_{{\rm {\bf I}}_{\rm {\bf j}} }^\ast \in \tau_{I}^{\medstar}$.
\end{lemma}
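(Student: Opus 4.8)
The plan is to prove the lemma by induction on the height $o(I)$ of Definition~4.5, which is the new induction strategy announced before Definition~4.2; the point of inducting on height rather than on the number of branches (as in [6]) is that the splitting operations make $(\mathcal{D})$ output \emph{sets} of hypersequents, so the branch-counting argument of [6] no longer closes.

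For the base case $o(I)=1$ the set $I=\{H_i^c\}$ is a single maximal $(pEC)$-node (Proposition~4.3(iii)), and $\overline{I}$ is the set of all $(pEC)$-nodes lying below $H_i^c$ on its branch. I would build $\tau_I^{\medstar}$ by descending this branch and eliminating each $(pEC)$-sequent in turn: each elimination is an instance of the generalized density rule $(\mathcal{D})$ of Theorem~3.8, realized as a run of splicing operations (Corollary~3.6) closed off by at most one splitting operation (Lemma~3.14), and the contraction at $H_i^c$ itself is discharged by a single application of $\langle EC_\Omega^\ast\rangle$, whose soundness is Lemma~3.16. Condition~(iv) holds because removing exactly the sequents below $H_i^c$ leaves only sequents whose $(pEC)$-nodes are parallel to $H_i^c$, while (i)--(iii) are recorded directly by the construction.

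For the induction step $o(I)>1$ we have $|I|>1$, so by Proposition~4.3(iv) the intersection node $H_I^V$ splits $I$ into complete sets $I_l$ and $I_r$ of strictly smaller height, and the induction hypothesis supplies derivations $\tau_{I_l}^{\medstar}$, $\tau_{I_r}^{\medstar}$ of closed hypersequents $G_{I_l}^{\medstar}$, $G_{I_r}^{\medstar}$ meeting (i)--(iv). Since $H_I^V$ is the conclusion of a two-premise rule $(II)$ in $\tau^\ast$, I would graft the two sub-derivations at $H_I^V$ --- using the preceding lemma to guarantee that the shared $H''$ is separable in $\tau_{H_i^c}^{\medstar(J)}$ --- then run the separation algorithm of [6] along the branch from $G\vert G^\ast$ down to $H_I^V$ to obtain the spliced hypersequents $G_{H_I^V:H}^{\medstar(J)}$ of (iii), and assemble the $(pEC)$-nodes of $\overline{I}$ with their focus sequents into the elimination rules $\langle\tau_{{\rm {\bf I}}_{\rm {\bf j}}}^\ast\rangle$ of (i). The monotonicity $\partial_{\tau_I^{\medstar}}(G_{{\rm {\bf I}}_{\rm {\bf j}}}^\ast)\leqslant\partial_{\tau_I^{\medstar}}(G_{b_k}\vert S_{j_k}^c)$ in (ii) is preserved because the skeleton $\overline{\tau}_I^{\medstar}$ records precisely the $\tau^\ast$-position each hypersequent descends from, and grafting never pushes this position below $H_I^V$.

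The hard part will be Step~2 of the grafting, where the failure of Lemma~7.11 of [6] is felt: because the splitting operation of Construction~3.1 turns $\Gamma,p_j,\Delta\Rightarrow p_j$ into the two sequents $\Gamma\Rightarrow t$ and $\Delta\Rightarrow t$, the copies $\{G_{{\rm {\bf I}}_{l\backslash r}}^{\medstar}\}^{m_{q'}}$ arising where the two sub-derivations share a branch cannot be folded back to a single copy by ordinary external contraction. The key is to show these copies are exactly the splitting and splicing units classified by Lemma~3.9 and Definition~3.10, so that $\langle EC_\Omega^\ast\rangle$ of Construction~3.15 applies to each; concretely, case~(ii) of that construction --- where two splitting units with splitting variables $k,k'$ force $D_{K\cup K'}(H)$ into the split form that permits discarding the copy $G_2''$ --- is the mechanism that replaces the naive contraction of [6]. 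Checking that the height induction keeps every $(pEC)$-node of the two subtrees mutually parallel by Proposition~4.3(i), so that each contraction met in the algorithm is of this constrained shape, is the crux that secures (iv) and completes the construction.
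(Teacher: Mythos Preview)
Your overall plan matches the paper's: induction on $o(I)$, with the base case handled by the single-node constructions of [6] and the induction step grafting $\tau_{I_l}^{\medstar}$ and $\tau_{I_r}^{\medstar}$ at $H_I^V$. You also correctly locate the crux at the failure of Lemma~7.11 of [6] and identify $\langle EC_\Omega^\ast\rangle$ as the remedy.

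The gap is in your account of \emph{why} $\langle EC_\Omega^\ast\rangle$ applies. The precondition of Construction~3.15 is not that the duplicated copies are themselves splitting or splicing units; it is that for every \emph{other} splitting unit $[S']_H$ of the ambient hypersequent, both copies land on the same side of the split $\langle H\rangle_K^-$ versus $\langle H\rangle_K^+$. The paper isolates this as a separate Property~(A) --- that $G_{I_l}^{\medstar}$ contains at most one copy of $G_{H_I^V:G''}^{\medstar(J)}\vert\widehat{S''}$ --- and its proof is the real work. One first needs an auxiliary Property~(B), that the splitting sequents of $[S_i^c]_{G_{I_l}^{\medstar}}$ coincide with those of $[S_i^c]_{G_2\vert S_2}$, and then a two-case analysis on whether $S_1\in\langle G_1\vert S_1\rangle_{G_b\vert S_j^c}$ for some $\tau_{G_b\vert S_j^c}^\ast\in\tau_{H_I^V:G''}^{\medstar(J)}$. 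In Case~1 the external splitting unit is shown not to separate the two copies because $\langle G_{I_l}^{\medstar}\rangle_k^-\setminus\{\Gamma\Rightarrow t\}\subseteq G_{H_2:G_2}^{\medstar(J)}$, forcing both into $\langle G_{I_l}^{\medstar}\rangle_k^+$; in Case~2 the splitting unit sits inside one copy, its mirror $\{[S]\}_{k'}$ sits inside the other, and one argues $\mathcal{D}(\langle G_{I_l}^{\medstar}\rangle_k^-)=\mathcal{D}(\langle G_{I_l}^{\medstar}\rangle_{k'}^-)$ so one of these duplicate output sets may be discarded. Both cases hinge on deducing $H_i^c\parallel G'\vert S'$ from the \emph{completeness} of $I_l$ together with $G'\vert S'\leqslant H_{I_l}^V$; this is precisely where inducting over complete subsets pays off, and your appeal to Proposition~4.3(i) alone (parallelism of maximal nodes) does not supply it.
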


\begin{proof} $\tau _I^{\medstar} $ is constructed by induction on
$o(I)$. For the base case,  let $o(I)=1$,  then $\tau _I^{\medstar} $ is built up
by Construction 7.3 and 7.7 in [6]. For the induction case,  suppose that
$o(I)\geqslant 2$,  $\tau _{I_l }^{\medstar} $ and $\tau _{I_r }^{\medstar} $ are
constructed such that Claims from (i) to (iv) hold.

Let $\cfrac{G'\vert S'\quad G''\vert S''}{G'\vert
G''\vert H'}(II)\in \tau ^\ast $,  where $G'\vert G''\vert H'\mbox{=}H_I^V $.
Then $I_l $ and $I_r $ occur in the left subtree $\tau ^\ast (G'\vert S')$
and right subtree $\tau ^\ast (G''\vert S'')$ of $\tau ^\ast (H_I^V )$,
respectively. Here,  almost all manipulations of the new main algorithm are
same as those of the old main algorithm. There are some caveats need to be
considered.

Firstly,  all leaves $\cfrac{\underline{\qquad\, \, }}{G\vert G^\ast }\left\langle {\tau ^\ast } \right\rangle \in \tau
_{I_l }^{\medstar}$ are replaced with $\tau _{{\rm {\bf I}}_{{\rm
{\bf j}}_r } }^\ast $  in Step 3 at Stage
1 in old main algorithm and,  $\cfrac{\underline{\qquad\, \, }}{G\vert G^\ast }\left\langle {\tau ^\ast } \right\rangle \in \tau _{I_r}^{\medstar}$ are replaced with $\tau _{I_l }^{\medstar} $ in Step 3 at
Stage 2.
Secondly,   we abandon the definitions of branch to $I$ and Notation 8.1 in [6] and then the symbol ${\rm {\bf I}}$ of the set  of branches,    which occur in $\tau _\mathbf{I}^{\medstar}$ in [6],  is replaced with $I$ in the new algorithm. We also replace $\Omega$ in $\tau _\mathbf{I}^{\Omega}$ with $\medstar$.
Thirdly,  under the new requirement that $I$ is complete,  we prove the
following property.

{\bf Property (A)}
$G_{I_l }^{\medstar}$ contains at most one copy of $G_{H_I^V: G''}^{\medstar(J)}\vert \widehat{S''}$.

\begin{proof}Suppose that there exist two copies $\left\{ {G_{H_I^V: G''}^{\medstar(J)} \vert
\widehat{S''}} \right\}_1 $ and $\left\{ {G_{H_I^V: G''}^{\medstar(J)} \vert
\widehat{S''}} \right\}_2 $ of $G_{H_I^V: G''}^{\medstar(J)} \vert \widehat{S''}$
in $G_{{\rm {\bf I}}_l }^{\medstar}$ and,  we put them into $\{\}_1 $ and $\{\}_2 $
in order to distinguish them. Let $\left[ S \right]_{G_{I_l }^{\medstar}
} $ be a splitting unit of $G_{{\rm {\bf I}}_l }^{\medstar}$ and $S$ its splitting
sequent. Then $\left| {v_l (S)} \right|+\left| {v_r (S)} \right|\geqslant
2$. Thus $S$ is a $(pEC)$-sequent and has the form $S_i^c $ by $\left[ S
\right]_{G_{I_l }^{\medstar}} \subseteq _c G\vert G^\ast $. Then
$[S]_{G_{I_l }^{\medstar}} =[S_i^c ]_{G_{I_l }^{\medstar}} $,  $H_i^c
\parallel H_j^c $ for all $H_j^c \in I_l $ and $S_i^c \in
G_{\mathcal{I}_{{\rm {\bf j}}_l } }^\ast $ for some $\tau _{{\bf I}_{{\rm {\bf
j}}_l } }^\ast \in \tau _{I_l }^{\medstar}$ by Claim (iv). Since $I_l $ is complete
and $G'\vert S'\leqslant H_{I_l }^V $,  then $H_i^c \parallel G'\vert S'$.

Let $\tau _{{\bf I}_{{\rm {\bf j}}_l } }^\ast $ be in the form
$\cfrac{\underline{G_{b_{l1} } \vert S_{j_{l1} }^c \, \, \, G_{b_{l2} } \vert S_{j_{l2} }^c \, \, \, \cdots\, \, \, G_{b_{lu} } \vert S_{j_{lu} }^c }}{G_{{\bf I}_{{\rm {\bf j}}_l }
}^\ast =\{{\kern 1pt}G_{b_{lk} } \}_{k=1}^u {\kern 1pt}\vert
G_{\mathcal{I}_{{\rm {\bf j}}_l } }^\ast }\left\langle {\tau _{{\bf I}_{{\rm {\bf
j}}_l } }^\ast } \right\rangle,
\cfrac{G_1 \vert S_1 \, \, \, G_2
\vert S_2 }{H_1 \equiv G_1 \vert G_2 \vert H''}(II)\in \tau ^\ast, $ where
$G_1 \vert S_1 \leqslant G'\vert S'$,  $G_2 \vert S_2 \leqslant H_i^c $,  $G_1
\vert G_2 \vert H''$ is the intersection node of $H_i^c $ and $G'\vert S'$,  as shown in Figure 3. Then
$\cfrac{\{G_{b_{lk} } \}_{k=1}^u {\kern 1pt}\vert \left\langle {G_1
\vert S_1 {\kern 1pt}} \right\rangle _{\mathcal{I}_{{\rm {\bf j}}_l } }
\, \, \, G_2 \vert S_2 }{H_2 \equiv
\{G_{b_{lk} } \}_{k=1}^u {\kern 1pt}\vert \left\langle {G_1 {\kern 1pt}}
\right\rangle _{\mathcal{I}_{{\rm {\bf j}}_l } } \vert G_2 \vert H''}(II)\in
\tau _{{\bf I}_{{\rm {\bf j}}_l } }^\ast $ by $G_1 \vert S_1 \leqslant G'\vert
S'\leqslant H_{I_l }^V $ and $S_i^c \in G_{\mathcal{I}_{{\rm {\bf j}}_l }
}^\ast $. Since $S_2 $ is separable in $G_{I_l }^{\medstar}$ by $G'\vert
S'\leqslant H_{I_l }^V $,  then $S_i^c \in G_2 \vert S_2 $ and $S_i^c $ is
not $S_2 $.

\[
\begin{array}{l}
 \qquad\quad\quad \, \,
 \ddots  \vdots  {\mathinner{\mkern0.5mu\raise0.7pt\hbox{.}\mkern0.0mu
 \raise2.1pt\hbox{.}\mkern0.8mu\raise3.6pt\hbox{.}}}
 \quad \quad\ddots  \vdots  {\mathinner{\mkern0.5mu\raise0.7pt\hbox{.}\mkern0.0mu
 \raise2.1pt\hbox{.}\mkern0.8mu\raise3.6pt\hbox{.}}}
 \quad\, \, \, \, \, \, \cdots\qquad \ddots  \vdots  {\mathinner{\mkern0.5mu\raise0.7pt\hbox{.}\mkern0.0mu
 \raise2.1pt\hbox{.}\mkern0.8mu\raise3.6pt\hbox{.}}} \\
 {\kern 1pt}{\kern 1pt}{\kern 1pt}{\kern 1pt}{\kern 1pt}{\kern 1pt}\tau
_{{\rm {\bf I}}_{{\rm {\bf j}}_l }}^\ast \left\{
{\begin{array}{l}
 {\kern 1pt}{\kern 1pt}G_{b_{l1} } \vert S_{j_{l1}}^c  \quad G_{b_{l2} } \vert
S_{j_{l2}}^c \quad \cdots
 \quad G_{b_{lu} } \vert
S_{j_{lu}}^c {\kern 1pt}{\kern 1pt}{\kern 1pt}{\kern 1pt}{\kern
1pt}{\kern 1pt}{\kern 1pt}{\kern 1pt}{\kern 1pt}{\kern 1pt}{\kern 1pt}{\kern
1pt} \\
 \quad\quad \qquad\qquad\ddots  \vdots  {\mathinner{\mkern0.5mu\raise0.7pt\hbox{.}\mkern0.0mu
 \raise2.1pt\hbox{.}\mkern0.8mu\raise3.6pt\hbox{.}}}
 \qquad\quad\qquad\qquad\qquad
 \ddots  \vdots  {\mathinner{\mkern0.5mu\raise0.7pt\hbox{.}\mkern0.0mu
 \raise2.1pt\hbox{.}\mkern0.8mu\raise3.6pt\hbox{.}}} \\
 \cfrac{ \qquad\qquad \{G_{b_{lk} }
\}_{k=1}^u {\kern 1pt}\vert \left\langle {G_1 \vert S_1 {\kern 1pt}}
\right\rangle _{\mathcal{I}_{{\rm {\bf j}}_l } }  \qquad\qquad
G_2 \vert S_2 }{H_2\equiv \{G_{b_{lk} }
\}_{k=1}^u {\kern 1pt}\vert \left\langle {G_1 {\kern 1pt}} \right\rangle
_{\mathcal{I}_{{\rm {\bf j}}_l }} \vert G_2 \vert
H''}(II) \\
 \qquad\qquad  \qquad\qquad \ddots  \vdots  {\mathinner{\mkern0.5mu\raise0.7pt\hbox{.}\mkern0.0mu
 \raise2.1pt\hbox{.}\mkern0.8mu\raise3.6pt\hbox{.}}} \\
\qquad\qquad G_{{\rm {\bf I}}_{{\rm {\bf j}}_l }
}^\ast =\{{\kern 1pt}G_{b_{lk} } \}_{k=1}^u {\kern 1pt}\vert
G_{\mathcal{I}_{{\rm {\bf j}}_l }}^\ast {\kern 1pt} \\
 \end{array}} \right. \\
  \qquad\quad  \qquad\qquad\qquad\qquad
\ddots  \vdots  {\mathinner{\mkern0.5mu\raise0.7pt\hbox{.}\mkern0.0mu
 \raise2.1pt\hbox{.}\mkern0.8mu\raise3.6pt\hbox{.}}} \\
 {\kern 1pt}{\kern 1pt}{\kern 1pt}{\kern 1pt}{\kern 1pt}{\kern 1pt} \quad\qquad \qquad\qquad\qquad\quad\, \, \,  G_{I_l }^{\medstar}\\
 \end{array}
\]
\begin{center}
Figure 3 A fragment  of $\tau _{I_l}^{\medstar} $
\end{center}

 {\bf Property  (B)}
The set of splitting sequents of $\left[ {S_i^c } \right]_{G_{I_l }^{\medstar}} $ is equal to that of $\left[ {S_i^c } \right]_{G_2 \vert S_2 }
$.
\begin{proof}
Let $\cfrac{G_1 '\vert S_1 '\, \, \, \, \, G_2 '\vert S_2 '}{H_1 '\equiv G_1 '\vert G_2 '\vert H'''}(II)\in \tau
^\ast $,  $G_1 '\vert S_1 '{\kern 1pt}{\kern 1pt}\leqslant H_1 $ and $S_1
'\in \left\langle {G_1 '\vert S_1 '} \right\rangle
_{\mathcal{I}_{{\rm {\bf j}}_l } } $. Then $S_1 '$ and $S_2 '$ are separable
in $G_{I_l }^{\medstar}$. Thus $G_{H_1 ': G_2 '}^{\medstar(J)} \vert
\widehat{S_2 '}\subseteq G_{I_l }^{\medstar}$ is closed. Hence $G_{H_1
: G_2 }^{\medstar(J)} \vert \widehat{S_2 }-\bigcup _{G_2 '\vert S_2 '} G_{H_1 ': G_2
'}^{\medstar(J)} \vert \widehat{S_2 '}$ is closed,  where $G_2 '\vert S_2 '$ in
$\bigcup _{G_2 '\vert S_2 '} $ runs over all $II\in \tau ^\ast $ above such
that $G_{H_1 ': G_2 '}^{\medstar(J)} \vert \widehat{S_2 '}\subseteq G_{H_1: G_2
}^{\medstar(J)} \vert \widehat{S_2 }$. Therefore $v(G_{H_1: G_2 }^{\medstar(J)} \vert\widehat{S_2 }-\bigcup _{G_2 '\vert S_2 '} G_{H_1 ': G_2 '}^{\medstar(J)} \vert
\widehat{S_2 '})=v(G_2 \vert S_2 )$,  $\{S_j^c: S_j^c \in G_2 \vert S_2
, H_j^c \geqslant G_2 \vert S_2 \}=\{S_j^c: S_j^c \in G_{H_1: G_2 }^{\medstar(J)}
\vert \widehat{S_2 }-\bigcup _{G_2 '\vert S_2 '} G_{H_1 ': G_2 '}^{\medstar(J)} \vert\widehat{S_2 '}\}$ and $\left[ {S_i^c } \right]_{G_{I_l }^{\medstar}}
\subseteq G_{H_1: G_2 }^{\medstar(J)} \vert \widehat{S_2 }-\bigcup _{G_2 '\vert S_2 '}G_{H_1 ': G_2 '}^{\medstar(J)} \vert \widehat{S_2 '}$. Then the set of splitting
sequents of $\left[ {S_i^c } \right]_{G_{I_l }^{\medstar}} $ is equal to
that of $\left[ {S_i^c } \right]_{G_2 \vert S_2 } $ since each splitting
sequent $S'''\in \left[ {S_i^c } \right]_{G_{I_l }^{\medstar}} $ is a
$(pEC)$-sequent by $\left| {v_l (S''')} \right|+\left| {v_r (S''')}
\right|\geqslant 2$ and $S'''\in _c G\vert G^\ast $.  This completes the proof of  Property (B).
\end{proof}

We therefore assume that,  without loss of generality,  $S_i^c $ is in the
form $\Gamma, p_k, \Delta \Rightarrow p_k $ by Property (B),  Lemma 3.16 and
the observation that each derivation-splicing operation is local. There are
two cases to be considered in the following.

\textbf{Case 1} $S_1 \notin \left\langle {G_1 \vert S_1 } \right\rangle
_{G_b \vert S_j^c } $ for all $\tau _{G_b \vert S_j^c }^\ast \in \tau
_{H_I^V: G''}^{\medstar(J)} $,  $G_1 \vert S_1 \leqslant H_j^c \leqslant H_I^V $.
Then $G_{H_1: G_2 }^{\medstar(J)} \bigcap G_{H_I^V: G''}^{\medstar(J)} =\emptyset $. We
assume that,  without loss of generality,  $\left\langle {G_2 \vert S_2 }
\right\rangle _k^- =G_2 '\vert \Gamma \Rightarrow t$,  $\left\langle {G_2
\vert S_2 } \right\rangle _k^+ =G_2 ''\vert S_2 \vert \Delta \Rightarrow
t$.  Then $\left\langle {G_{I_l }^{\medstar}} \right\rangle _k^- =G_{H_2: G_2'}^{\medstar(J)} \vert \Gamma \Rightarrow t$ since $S=\Gamma, p_k, \Delta
\Rightarrow p_k $ isn't a focus sequent at all nodes from $G_2 \vert S_2 $
to $G_{I_l }^{\medstar}$ in $\tau _{I_l}^{\medstar}$ and,  $H_j^c \leqslant H_1 $ or $H_j^c \vert \vert G_1 \vert S_1 $ for all $S_j^c \in G_2
'$ by Lemma 6.7 in [6]. Thus $\left\langle {G_{I_l }^{\medstar}}
\right\rangle _k^- \backslash \Gamma \Rightarrow t\subseteq G_{H_2: G_2
}^{\medstar(J)} $. Therefore $\left\{ {G_{H_I^V: G''}^{\medstar(J)} \vert \widehat{S''}}
\right\}_1 \vert \left\{ {G_{H_I^V: G''}^{\medstar(J)} \vert \widehat{S''}}
\right\}_2 \subseteq \left\langle {G_{I_l }^{\medstar}} \right\rangle
_k^+ $ because $\left[ S \right]_{G_{I_l }^{\medstar}} \subseteq G_{H_2
: G_2 }^{\medstar(J)} \vert \widehat{S_2 }$,  $G_{H_2: G_2 }^{\medstar(J)} \vert
\widehat{S_2 }\bigcap( \left\{ {G_{H_I^V: G''}^{\medstar(J)} \vert \widehat{S''}}
\right\}_1 \vert \left\{ {G_{H_I^V: G''}^{\medstar(J)} \vert \widehat{S''}}
\right\}_2 )=\emptyset $ and $\left\langle {G_{I_l }^{\medstar}}
\right\rangle _k^- \backslash \{\Gamma \Rightarrow t\}\vert \left\langle
{G_{I_l }^{\medstar}} \right\rangle _k^+ \backslash \{\Delta \Rightarrow
t\}\vert \Gamma, p_k, \Delta \Rightarrow p_k =G_{I_l }^{\medstar}$. This
shows that any splitting unit $\left[ S \right]_{G_{I_l }^{\medstar}} $
outside $G_{H_I^V: G''}^{\medstar(J)} \vert \widehat{S''}$ in $G_{{\rm {\bf I}}_l
}^{\medstar}$ doesn't take two copies of $G_{H_I^V: G''}^{\medstar(J)} \vert \widehat{S''}$
apart,  i.e.,  the case of $\left\{ {G_{H_I^V: G''}^{\medstar(J)} \vert
\widehat{S''}} \right\}_1 \subseteq \left\langle {G_{I_l }^{\medstar}}
\right\rangle _k^- $ and $\left\{ {G_{H_I^V: G''}^{\medstar(J)} \vert
\widehat{S''}} \right\}_2 \subseteq \left\langle {G_{I_l }^{\medstar}}
\right\rangle _k^+ $ doesn't happen.

\textbf{Case 2} $S_1 \in \left\langle {G_1 \vert S_1 } \right\rangle _{G_b
\vert S_j^c } $ for some $\tau _{G_b \vert S_j^c }^\ast \in \tau _{H_I^V
: G''}^{\medstar(J)} $,  $G_1 \vert S_1 \leqslant H_j^c \leqslant H_I^V $. Then\\
 $G_b
\vert \left\langle {G_1 } \right\rangle _{S_j^c } \vert G_2 \vert H''\in
\tau _{G_b \vert S_j^c }^\ast $. Thus $G_{H_1: G_2 }^{\medstar(J)} \vert
\widehat{S_2 }\subseteq G_{H_I^V: G''}^{\medstar(J)} \vert \widehat{S''}$. Hence
$\left[ {S_i^c } \right]_{G_{I_l }^{\medstar}} \subseteq G_{H_I^V
: G''}^{\medstar(J)} \vert \widehat{S''}$. The case of $S_i^c \in G''$ is tackled
with the same procedure as the following. Let $\left[ {S_i^c }
\right]_{G_{I_l }^{\medstar}} \subseteq \left\{ {G_{H_I^V: G''}^{\medstar(J)}
\vert \widehat{S''}} \right\}_1 $. Then there exists a copy of $\left[ S
\right]_{G_{I_l }^{\medstar}} $ in $\left\{ {G_{H_I^V: G''}^{\medstar(J)} \vert
\widehat{S''}} \right\}_2 $ and let $\Gamma, p_{k'}, \Delta \Rightarrow
p_{k'} $ be its splitting sequent. We put two splitting units into $\{\}_k $
and $\{\}_{k'} $ in order to distinguish them. Then $\{\left[ S
\right]_{G_{I_l }^{\medstar}} \}_k \subseteq \left\{ {G_{H_I^V
: G''}^{\medstar(J)} \vert \widehat{S''}} \right\}_1 $ and $\{\left[ S
\right]_{G_{I_l }^{\medstar}} \}_{k'} \subseteq \left\{ {G_{H_I^V
: G''}^{\medstar(J)} \vert \widehat{S''}} \right\}_2 $. We assume that,  without loss
of generality,  $\left\langle {G_2 \vert S_2 } \right\rangle _k^- =G_2 '\vert
\Gamma \Rightarrow t$,  $\left\langle {G_2 \vert S_2 } \right\rangle _k^+
=G_2 ''\vert S_2 \vert \Delta \Rightarrow t$. Then $\left\langle {G_{I_l }^{\medstar}} \right\rangle _k^- \backslash \{\Gamma \Rightarrow
t\}\subseteq \left\{ {G_{H_I^V: G''}^{\medstar(J)} \vert \widehat{S''}} \right\}_1
$.  Thus $\{\left[ S \right]_{G_{I_l }^{\medstar}} \}_{k'} \subseteq
\left\{ {G_{H_I^V: G''}^{\medstar(J)} \vert \widehat{S''}} \right\}_2 \subseteq
\left\langle {G_{I_l }^{\medstar}} \right\rangle _k^+ $ by $\left\langle
{G_{I_l }^{\medstar}} \right\rangle _k^- \backslash \{\Gamma \Rightarrow
t\}\bigcup \left\langle {G_{I_l }^{\medstar}} \right\rangle _k^+ \backslash
\{\Delta \Rightarrow t\}=G_{I_l }^{\medstar}\backslash \Gamma, p_k
, \Delta \Rightarrow p_k $. Then $\left\langle {\left\langle {G_{I_l }^{\medstar}} \right\rangle _k^+ } \right\rangle _{k'}^- =\left\langle
{G_{I_l }^{\medstar}} \right\rangle _{k'}^- $,  $\{\Delta \Rightarrow
t\}_k \vert \{\Delta \Rightarrow t\}_{k'} \subseteq \left\langle
{\left\langle {G_{I_l }^{\medstar}} \right\rangle _k^+ } \right\rangle
_{k'}^+ $ where,  we put two copies of $\Delta \Rightarrow t$ into $\{\}_k $
and $\{\}_{k'} $ in order to distinguish them. Then $\Gamma \Rightarrow t\in
\left\langle {G_{I_l }^{\medstar}} \right\rangle _{k'}^- $,  $\vdash
_{{\rm {\bf GL}}} \left\langle {G_{I_l }^{\medstar}} \right\rangle _k^-
$,  $\vdash _{{\rm {\bf GL}}} \left\langle {G_{I_l }^{\medstar}}
\right\rangle _{k'}^- $ and $\left\langle {G_{I_l }^{\medstar}}
\right\rangle _{k'}^- $ is a copy of $\left\langle {G_{I_l }^{\medstar}}
\right\rangle _k^- $. Then $\mathcal{D}(\left\langle {G_{I_l }^{\medstar}
} \right\rangle _k^- )=\mathcal{D}(\left\langle {G_{I_l }^{\medstar}}
\right\rangle _{k'}^- )\subseteq \mathcal{D}(G_{I_l }^{\medstar})$ could
be cut off one of them because they are two same sets of hypersequents in
$\mathcal{D}(G_{I_l }^{\medstar})$. Meanwhile,  two copies of $\Delta
\Rightarrow t$ in $\left\langle {\left\langle {G_{I_l }^{\medstar}}
\right\rangle _k^+ } \right\rangle _{k'}^+ $ can't be taken apart by any
splitting unit outside $G_{H_I^V: G''}^{\medstar(J)} \vert \widehat{S''}$ in
$G_{I_l }^{\medstar}$ by the reason as shown in Case 1 and thus could be
contracted into one by $(EC)$ in $\mathcal{D}(G_{I_l }^{\medstar})$.
Therefore two copies $\left\{ {G_{H_I^V: G''}^{\medstar(J)} \vert \widehat{S''}}
\right\}_1 $ and $\left\{ {G_{H_I^V: G''}^{\medstar(J)} \vert \widehat{S''}}
\right\}_2 $ of $G_{H_I^V: G''}^{\medstar(J)} \vert \widehat{S''}$ can be
contracted into one in $G_{I_l }^{\medstar}$ by $\left\langle {EC_\Omega
^\ast } \right\rangle $. This completes the proof of  Property (A).
\end{proof}
With Property (A),    all manipulations in the old main algorithm in [6]
work well.  This completes the construction of $\tau_{I}^{\medstar} $
and the proof of Theorem 4.1.
\end{proof}

\begin{theorem}
The standard completeness  holds for ${\rm {\bf HpsUL}}^\ast $.
\end{theorem}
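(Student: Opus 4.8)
The plan is to derive standard completeness from the density-elimination theorem just proved (Theorem 4.1) together with the general reduction of Metcalfe and Montagna [4]; once Theorem 4.1 is available the remaining argument is routine and is essentially a citation.

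First I would record the chain completeness of the Hilbert system: since ${\rm {\bf HpsUL}}^\ast$ axiomatizes a variety of bounded representable residuated lattices, it is complete with respect to its linearly ordered members, i.e. $\vdash_{{\rm {\bf HpsUL}}^\ast}\varphi$ holds iff $\varphi$ is valid in every ${\rm {\bf HpsUL}}^\ast$-chain (inherited from [1,3]). Next I would pass to ${\rm {\bf GpsUL}}^{\ast D}$, the calculus with the density rule $(D)$ of Definition 2.3. By the Lindenbaum-type construction of [4], $(D)$ is exactly the syntactic counterpart of density, so ${\rm {\bf GpsUL}}^{\ast D}$ is complete with respect to the \emph{dense} ${\rm {\bf HpsUL}}^\ast$-chains: soundness of $(D)$ holds on dense chains, and conversely any formula unprovable in ${\rm {\bf GpsUL}}^{\ast D}$ is refuted in a countable dense chain built from the calculus.

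Now I would invoke Theorem 4.1. Density elimination gives $\vdash_{{\rm {\bf GpsUL}}^{\ast D}}G$ iff $\vdash_{{\rm {\bf GpsUL}}^\ast}G$, and since ${\rm {\bf GpsUL}}^\ast$ and ${\rm {\bf HpsUL}}^\ast$ prove the same formulas, it follows that ${\rm {\bf HpsUL}}^\ast$ is already complete with respect to dense chains. The last step transfers this to the real unit interval: every countable dense ${\rm {\bf HpsUL}}^\ast$-chain embeds, via its Dedekind completion, into a complete dense chain carried by $[0,1]$, and this embedding preserves $\wedge,\vee,\odot,\to,\rightsquigarrow$ and the constants $t,\bot,\top$; hence validity over dense chains coincides with validity over standard chains. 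Combining the two yields $\vdash_{{\rm {\bf HpsUL}}^\ast}\varphi$ iff $\varphi$ is valid in every standard ${\rm {\bf HpsUL}}^\ast$-algebra --- that is, in every bounded residuated pseudo-uninorm on $[0,1]$ --- which is precisely standard completeness.

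The real obstacle of the whole development is Theorem 4.1, which is now in hand, so nothing hard remains. The only delicate point in the reduction is the order-completion step, where in the non-commutative setting one must verify that the Dedekind completion preserves \emph{both} residua $\to$ and $\rightsquigarrow$ simultaneously; this is exactly what the Metcalfe--Montagna machinery of [4] guarantees, so I would cite it rather than reprove it, and the theorem follows as a lemma of that framework.
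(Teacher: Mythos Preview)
Your proposal is correct and follows essentially the same route as the paper: density elimination (Theorem 4.1) supplies the link $\vdash_{{\rm {\bf GpsUL}}^{\ast D}}\Rightarrow A \Leftrightarrow \vdash_{{\rm {\bf GpsUL}}^\ast}\Rightarrow A$, and the remaining links (Hilbert--hypersequent equivalence, completeness of ${\rm {\bf GpsUL}}^{\ast D}$ with respect to dense chains, and the Dedekind-completion embedding into $[0,1]$) are exactly the standard Metcalfe--Montagna reductions from [1,4] that the paper also cites without reproving. Your extra remark on checking that both residua $\to,\rightsquigarrow$ survive the completion is a fair caveat but is indeed covered by that framework, so nothing is missing.
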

\begin{proof}
Let  $\underset{}{\overset{i}{\longleftrightarrow}}$ denote the
$i$-th logical link of  iff  in the following. $\models_\mathcal{K} A$ means that
$v(A)\geqslant t$ for every algebra $\mathcal{A}$ in $\mathcal{K}$ and
valuation $v$ on $\mathcal{A}$.  Let ${\rm {\bf psUL}}^\ast $,  $\mbox{LIN(}{\rm {\bf psUL}}^\ast )$,  ${\rm {\bf psUL}}^{\ast D}$ and $[0, 1]_{{\rm {\bf psUL}}^\ast } $ denote the
classes of all ${\rm {\bf psUL}}^\ast $-algebras,  ${\rm {\bf psUL}}^\ast
$-chain,  dense ${\rm {\bf psUL}}^\ast $-chain and standard ${\rm {\bf
psUL}}^\ast $-algebras ( i.e.,  their lattice reducts are $[0, 1]$),  respectively. We have an inference sequence,  as shown in
Figure 4.
\[
\boxed {\begin{array}{l}
 \vdash _{{\rm {\bf HpsUL}}^\ast } A\underset{}{\overset{1^\circ
}{\longleftrightarrow}}\vdash _{{\rm {\bf GpsUL}}^\ast } \Rightarrow
A\underset{}{\overset{2^\circ }{\longleftrightarrow}}\vdash _{{\rm {\bf
GpsUL}}^{\ast D}} \Rightarrow A\underset{}{\overset{3^\circ
}{\longleftrightarrow}}\models _{{\rm {\bf psUL}}^{\ast D}} A \\
\updownarrow 1\qquad\qquad\qquad\qquad\qquad\qquad\qquad
\qquad\qquad\qquad\updownarrow 4^{\circ} \\
 \models_{{\rm {\bf psUL}}^\ast }
A\underset{}{\overset{2}{\longleftrightarrow}}\models_{\mathrm{LIN}(\rm {\bf
psUL^{\ast}} )} A\underset{}{\overset{3}{\longleftrightarrow}}\models_{\rm {\bf
psUL^{\ast D}}} A \underset{}{\overset{4}{\longleftrightarrow}}\models_{[0, 1]_{{\rm {\bf psUL}}^\ast } } A \\
 \end{array}}
\]
\begin{center}
Figure 4  Two ways to prove standard completeness
\end{center}
Links  from 1 to 4 show Jenei and Montagna's  algebraic method to prove standard completeness  and currently,   it seems hopeless to built up the link 3,  see [7$\sim$10].  Links  from $1^{\circ}$ to $4^{\circ}$ show Metcalfe and Montagna's proof-theoretical method.
Density elimination is at Link $2^\circ $ in Figure 4 and other links are
proved by standard procedures with minor revisions and omitted,  see [1,  4].
\end{proof}  

\section*{References}
\bibliographystyle{elsarticle-harv}

\end{document}